\theoremstyle{definition}
\newtheorem{theorem}{Theorem}[section]
\newtheorem{proposition}{Proposition}[section]
\newtheorem{lemma}{Lemma}[section]
\newtheorem{question}{Question}
\newtheorem{example}{Example}
\newtheorem{remark}{Remark}
\crefname{lemma}{Lemma}{Lemmas}
\newtheorem{corollary}{Corollary}[theorem]
\newtheorem{definition}{Definition}[section]
\newcommand{\st}{\text{ such that }}
\newcommand{\cs}{\text{, }}
\newcommand{\also}{\text{ and }}
\newcolumntype{L}{>{$}l<{$}}
\newcolumntype{R}{>{$}r<{$}}
\newcolumntype{C}{>{$}c<{$}}
\newcommand{\zz}{\mathbb{Z}}
\newcommand{\rr}{\mathbb{R}}
\newcommand{\lisa}[1]{{\color{blue} \sf $\heartsuit $ Lisa: [#1]}}
\newcommand{\xingyi}[1]{{\color{orange} \sf $\star $ Xingyi: [#1]}}
\newcommand{\michael}[1]{{\color{violet} \sf $\diamond $ Michael: [#1]}}
\newcommand{\michaela}[1]{{\color{magenta} \sf $\pi $ Michaela: [#1]}}
\newcommand{\ethan}[1]{{\color{brown} \sf $\clubsuit$ Ethan: [#1]}}
\title{Optimal Constructions for DNA Self-Assembly of $k$-Regular Graphs}
\author{Lisa Baek, Ethan Bove, Michael Cho, Xingyi Zhang, Leyda Almod\'ovar, \\ Amanda Harsy, Cory Johnson, Jessica Sorrells}
\begin{document}

\maketitle

\begin{abstract}
    Within biology, it is of interest to construct DNA complexes of a certain shape. These complexes can be represented through graph theory, using edges to model strands of DNA joined at junctions, represented by vertices. Because guided construction is inefficient, design strategies for DNA self-assembly are desirable. In the flexible tile model, branched DNA molecules are referred to as tiles, each consisting of flexible unpaired cohesive ends with the ability to form bond-edges. We thus consider the minimum number of tile and bond-edge types necessary to construct a graph $G$ (i.e. a target structure) without allowing the formation of graphs of lesser order, or nonisomorphic graphs of equal order. We emphasize the concept of (un)swappable graphs, establishing lower bounds for unswappable graphs. We also introduce a method of establishing upper bounds via vertex covers. We apply both of these methods to prove new bounds on rook's graphs and Kneser graphs.
\end{abstract}

\section{Introduction}

\subsection{Motivation}
The modern advancement of fields like nanotechnology and microbiology has spurred research concerning the construction of various target structures and complexes via self-assembly. As structures continue to scale down in size, it has become increasingly difficult to build at the nano-scale. One approach to this problem utilizes DNA strands. DNA strands are made of four nucleotide bases - adenine[A], thymine[T], guanine[G] and cytosine[C]. Under the Watson-Crick model for base pairing, DNA exhibits complementary base pairing, where adenine[A] bonds with thymine[T] and guanine[G] bonds with cytosine[C].  At the ends of each DNA strand, there exist `sticky ends,' which can join with the sticky ends of another strand through complementary base pairing.  Thus, DNA has proven to be a particularly useful material for self-assembly of target structures due to these unique complementary properties \cite{Seeman_2016}.  
      
The applications of existing research are many, including various types of cancer treatment, gene therapy, targeted drug delivery, and biomolecular computing, among others (\cite{labean2007constructing, adleman, LABEAN200726, legs}). However, the synthesis of specific complexes in labs can be a costly endeavor.  As a result, it is of great interest to find more efficient ways to synthesize target complexes and understand just how efficiently it is possible to do so. Previous research has focused on construction of a variety of preprogrammed nanostructures, from branched DNA molecules \cite{kallenbach1983immobile} to DNA and RNA knots \cite{liu2016creating}. Certain mathematical models of these constructions have led to the study of some DNA nanostructures represented as discrete graphs \cite{nyu, ellis2014minimal, ellis2019tile}. Previous work has focused primarily on common graph families. Here, we work within a specific model of DNA self-assembly to determine new results applicable to graph families with certain restrictive properties that often arise in $k$-regular graphs. 

\subsection{Graph Theory Background}

Henceforth we will use the following standard definitions from graph theory in \cite{westGT}. A \textit{graph} $G$ is defined to be a set of vertices $V(G)$ and a set of edges $E(G)$, where an \textit{edge} is defined to be a set of two vertices from the vertex set. Two vertices $u,v$ are \textit{adjacent} if they are the endpoints of an edge (i.e. if $\{u, v\}\in E(G)$), and we say they are neighbors. For every edge $\{u, v\}\in E(G)$, the vertices $u$ and $v$ are called \textit{endpoints} of the edge, and the edge is said to be \textit{incident} to $u$ and $v$. A graph is said to be \textit{$k$-regular} if each vertex has $k$ incident edges. The \textit{order} of a graph is the number of vertices, which we will denote as $\#V(G)$. In this paper, we restrict all target graphs to also be simple graphs, i.e. graphs do not contain multiple edges between vertices, or edges that connect a vertex to itself. Note that the process of self-assembly may still result in a graph with multiple edges or self-loops.

\subsection{The Flexible Tile Model for DNA Self-Assembly}

In this work, we use the flexible tile model for DNA self-assembly, studied previously by Ellis-Monaghan et al. \cite{ellis2014minimal,ellis2019tile}. The central idea in using graph theory is to represent each target complex with a graph $G$. In this model, complexes are formed via the self-assembly of flexible $k$-armed junction DNA molecules, where $k$ is a variable number of branches for each molecule. For convenience, here we reproduce many of the relevant definitions found in \cite{ellis2014minimal} and \cite{ellis2019tile}.

\begin{figure}[h]
        \centering
        \includegraphics[height=5cm]{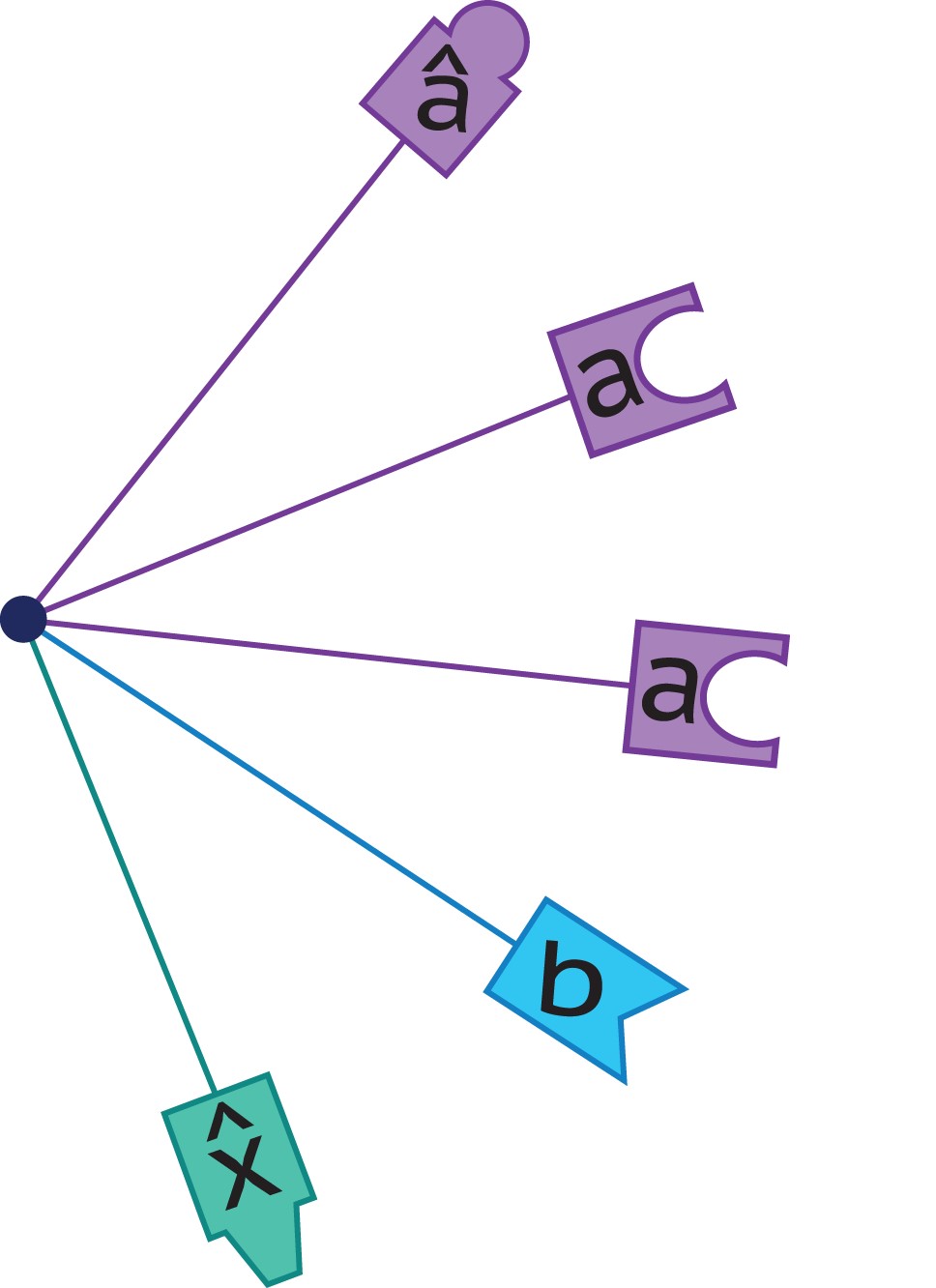}
        $\qquad$
        \includegraphics[height=5cm]{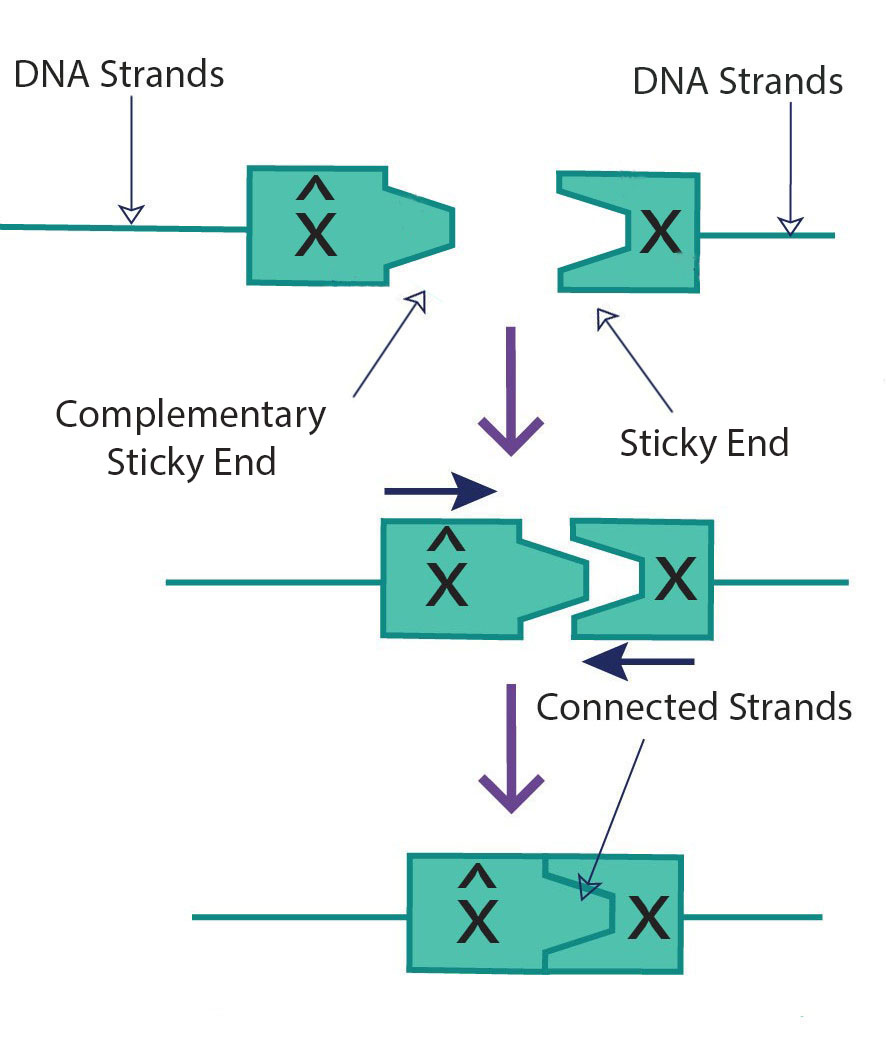}
        \label{tileexample_fig}
        \caption{Left: an example \textit{tile}, or branched junction molecule, with half-edges of bond-edge types $a$, $a$, $\hat{a}$, $b$, and $\hat{x}$. Right: an illustration of how corresponding half-edges can bond to form an edge \cite{almodovar2021complexity}.}
\end{figure}

Within the flexible tile model, \textit{tiles} represent the flexible branched molecules in the DNA structure, while each branch represents a \textit{half-edge}, often referred to as the 'sticky end' of the DNA. In particular, for each half-edge there exists a complementary half-edge to which it may bind and form a \textit{bond-edge} in the graph. For simplicity, we refer to this bond-edge and the complement with a letter and its hatted counterpart. Formally, a \emph{cohesive-end type} is an element of a finite set $S=\Sigma \cup \hat{\Sigma} $, where $\Sigma$ is the set of hatted symbols and $\hat{\Sigma}$ is the set of un-hatted symbols.  Each cohesive-end type corresponds to a distinct arrangement of bases at the end of a branched junction molecule arm. Thus, a hatted and an un-hatted symbol, say $a$ and $\hat{a}$, correspond to complementary cohesive-ends. In turn, each \textit{tile type} is given as a multiset of the cohesive-end types of its half-edges; for example, the tile in the left of Figure \ref{tileexample_fig} can be represented as $\{a^2, \hat{a}, b, \hat{x}\}$, where superscripts denote multiplicity of cohesive-end types. A cohesive-end type together with its complement is referred to as a \emph{bond-edge type}, which is typically identified by the un-hatted symbol.

Within the laboratory setting, the aim is to create the most optimal collection of branched molecule types, or tile types. A set of tile types is referred to as a \textit{pot}. In this case, ``optimal'' is used with regard to the number of distinct tile types used in the pot, as well as the number of distinct bond-edge types. The number of distinct tile types in $P$ is denoted by $\#P$, the set of bond-edge types that appear in tile types of a pot $P$ is denoted by $\Sigma(P)$, and we write $\#\Sigma(P)$ to denote the number of distinct bond-edge types that appear in $P$. These distinct bond-edge types may be illustrated in figures as different colors, which provides a connection to edge-coloring theory as seen in \cite{BF2020}. A particular pot $P$ is said to \textit{realize} a graph $G$ if we can assign a tile type in $P$ to each vertex in $G$ and its incident edges, such that every edge in $G$ is assigned a complementary pair of half-edges. In particular, note that a graph may be realized by a pot $P$ while using multiple copies of any given tile type in $P$. This is motivated by the idea that, within the laboratory setting, creation of specific tile types and bond-edge types is far more expensive than making copies of an already-existing tile type. Thus, we consider a variety of tile types, where these tile types may repeat in the self-assembled construction of a graph. For an example, consider Figure \ref{example1_fig} below; the pot $\{\{a^3\}, \{a, \hat{a}^2\}\}$ consisting of two tiles realizes the complete graph $K_4$ while using one copy of $\{a^3\}$ and three copies of $\{a, \hat{a}^2\}$.

\begin{figure}[h]
    \centering
    \includegraphics[scale=0.5]{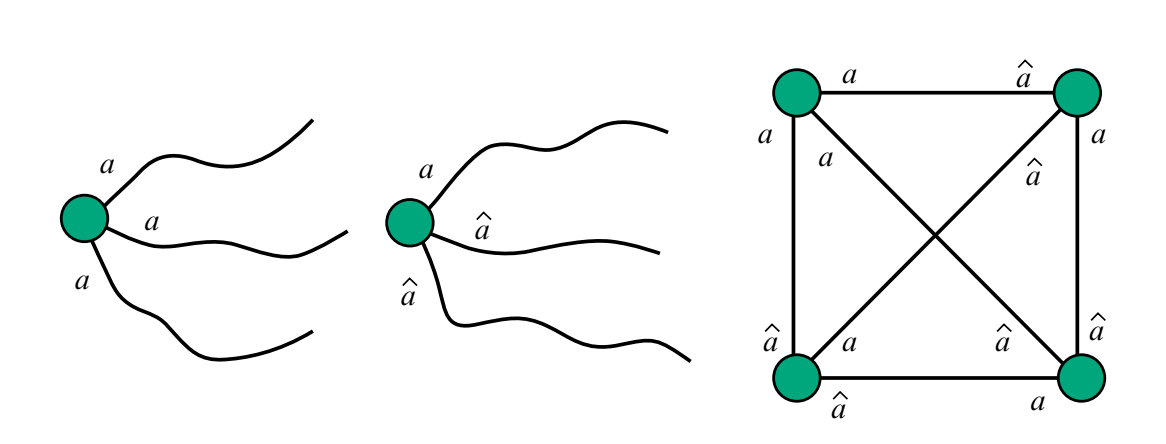}
    \caption{The pot $\{\{a^3\}, \{a, \hat{a}^2\}\}$ realizing the complete graph $K_4$ \cite{ellis2019tile}.}
    \label{example1_fig}
\end{figure}

The set of all graphs realized by a pot $P$ is denoted $\mathcal{O}(P)$, and is referred to as the \textit{output} of $P$. For a graph $G \in \mathcal{O}(P)$, when discussing a specific way that $P$ realizes $G$, we may define $\lambda : H \rightarrow \Sigma \cup \hat{\Sigma}$, where $H$ is the set of half-edges of $G$. Here $\lambda$ provides each edge with an orientation from the un-hatted half-edge to the hatted half-edge. This means that for each vertex $v \in V(G)$, $\lambda$ specifies a tile $t_v$ whose multi-set is the set of labels of half-edges incident to $v$. Thus we can use $\lambda$ to map vertices to tiles by $\lambda : V \rightarrow P_{\lambda}(G)$ such that $\lambda(v) = t_v$. For example, in Figure \ref{dumexample}, if the central vertex is denoted $v$, then $\lambda(v) = \{a^3\}$. This map $\lambda$ is referred to as the \textit{assembly design} for $G$. Furthermore, the set of all tiles used in the assembly design $\lambda$ of a graph $G$ is referred to as the \textit{assembling pot} and is denoted $P_\lambda(G)$. 

For the context of this work, we impose the following additional constraint. Given a target graph $G$, a pot $P$ that realizes $G$ must never realize another graph with fewer vertices, or a nonisomorphic graph with the same number of vertices. In other words, all graphs $G' \not\cong G$ that are realized by $P$ must be such that $\#V(G') > \#V(G)$. This constraint is often referred to as ``Scenario 3" \cite{ellis2014minimal}. Previous research, found in \cite{ellis2014minimal,ellis2019tile}, also defines Scenarios 1 and 2, where the additional constraint is significantly relaxed. However, the discussion in this work focuses solely on results for Scenario 3, and we thus omit the definitions for Scenarios 1 and 2 for brevity.

The Scenario 3 constraint is also motivated by trade-offs in the laboratory setting: self-assembly may generate smaller and different structures, and we wish to construct pots that will generate our target complex most of the time. Figure \ref{s3bad_fig} illustrates how a pot may not be valid in Scenario 3, since the pot $\{\{a^3\}, \{a, \hat{a}^2\}\}$ can realize another graph of order 4 that is not isomorphic to $K_4$.  

\begin{figure}[h]
    \label{dumexample}
    \centering
    \includegraphics[scale=0.45]{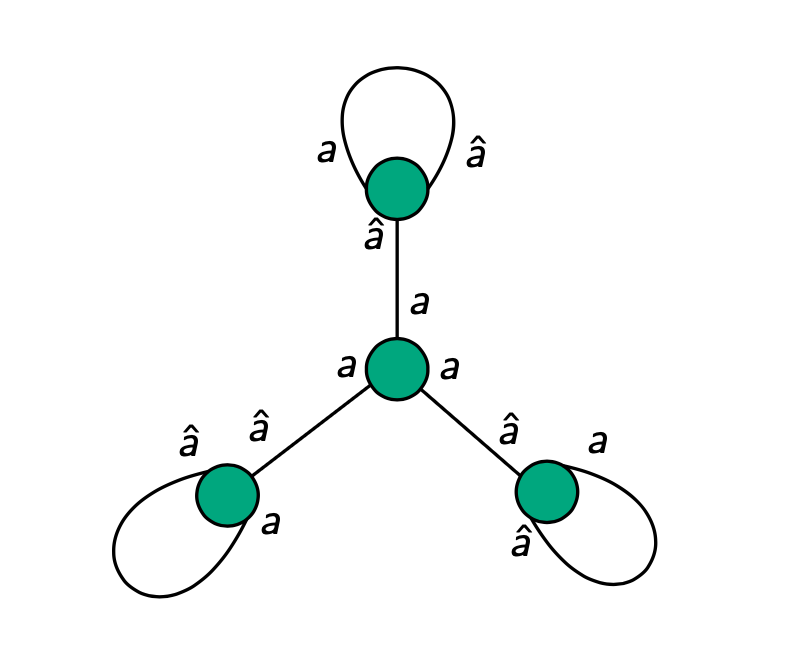}
    \caption{Another graph realized by the pot $\{\{a^3\}, \{a, \hat{a}^2\}\}$, showing that this pot is not valid in Scenario 3.}
    \label{s3bad_fig}
\end{figure}

For a graph $G$, we denote the minimum number of bond-edge types required in a pot that realizes $G$ as $B_3(G)$ and the minimum number of tile types required as $T_3(G)$. The subscript of 3 follows convention from previous research, referring to the Scenario 3 constraint. 

\subsection{Research Directions}

Some general results are known under this model \cite{BF2020, nyu}, along with some results for specific families of graphs in Scenario 3, such as cycle graphs and complete bipartite graphs \cite{ellis2014minimal}. Additional graph families have been explored in \cite{almodovar2019triangular, redmon2023optimal, griffin2023tile, almodovar2021optimal}. However, there still exist many open problems in finding optimal constructions for target complexes. In particular, the values of $B_3(G)$ and $T_3(G)$ are not known for arbitrary graphs (not to mention how to construct a valid pot for a given graph). In this paper, we are motivated by seeking a better understanding of the minimum number of bond-edge types and tile types necessary to construct complexes in the form of $k$-regular graphs for arbitrary $k$. In Section \ref{sec: lower}, we introduce and apply the concept of (un)swappable graphs to establish lower bounds on $B_3(G)$ and $T_3(G)$ for graphs with this property. It is observed that $k$-regular graphs often possess this particular property. On the other hand, Section \ref{sec: upper} provides a generalized construction method for (un)swappable graphs. While this method is not always the most efficient, it provides upper bounds on $B_3(G)$ and $T_3(G)$. Section \ref{section4} contains example applications of these methods for two specific $k$-regular graph families, including rook's graphs and Kneser graphs. We achieve exact results for sufficiently large rook's graphs and Kneser graphs of the form $Kn(n, 2)$. 

\section{Lower Bounds} \label{sec: lower}

Due to the constraints of Scenario 3, any pot that realizes a target graph must not be able to realize any nonisomorphic graph of the same (or lesser) order. However, when the tiles of the pot adjoin cohesive ends to realize a graph, there are often many different ways for half-edges to be matched with complementary half-edges. In particular, one way that half-edges could be matched differently is that two disjoint edges of the same bond-edge type (for example, two edges of bond-edge type $a$) could have their endpoints ``swapped.'' For example, two edges of bond-edge type $a$ could have their $a$ and $\hat{a}$ ends swapped, as shown in the following figure.

\begin{figure}[H]
    \centering
    \includegraphics[scale=0.65]{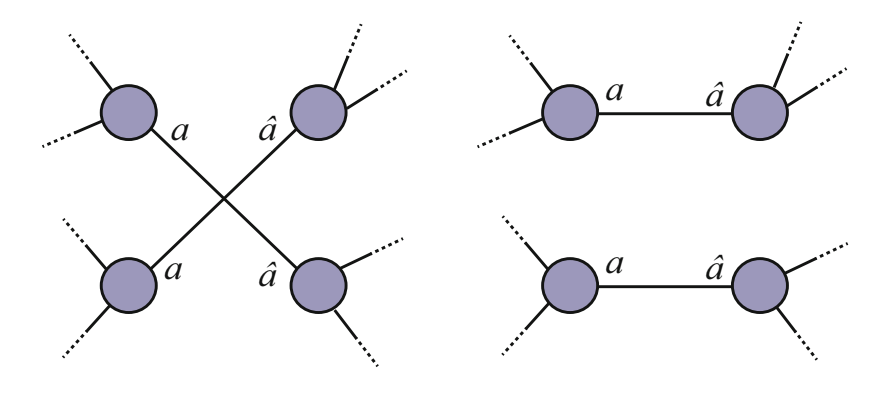}
    \caption{Half-edges may be swapped during self-assembly \cite{ellis2014minimal}.}
\end{figure}

Swapping these edges in this way creates a graph that can also be realized by the same pot, since the half-edges of every vertex is matched with the correct corresponding half-edge in the new graph as well (each vertex stays as the same tile type). This property was introduced in \cite{ellis2014minimal} for Scenario 3 via the following lemma: 

\begin{lemma}[Ellis-Monaghan et al.]
\label{swappinglemma}
Suppose $P$ is a pot that realizes graph $G$ in Scenario 3. If two disjoint edges $\{u, v\}$ and $\{s, t\}$ of $G$ use the same bond-edge type, then $G$ must be isomorphic to $G' = G - \{\{u, v\},\{s, t\}\} + \{\{u, t\}, \{s, v\}\}$.
\end{lemma}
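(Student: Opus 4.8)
The plan is to argue directly from the definition of Scenario 3, using the swapped graph $G'$ as an explicit witness that lies in $\mathcal{O}(P)$, and then invoking the Scenario 3 constraint to force $G' \cong G$. First I would fix an assembly design $\lambda$ for $G$ realized by $P$, so that each half-edge of $G$ carries a label in $\Sigma \cup \hat\Sigma$ and each edge is a complementary pair. Since $\{u,v\}$ and $\{s,t\}$ use the same bond-edge type $a$, after possibly relabeling I may assume the half-edge of $\{u,v\}$ at $u$ is labeled $a$, the half-edge at $v$ is labeled $\hat a$, and similarly the half-edge of $\{s,t\}$ at $s$ is labeled $a$ and at $t$ is labeled $\hat a$ (the two disjointness of the edges is what guarantees $u,v,s,t$ are four distinct vertices, so this re-pairing makes sense and produces a simple graph with no new loops or multi-edges between the relevant vertices — though I should note in passing that $\{u,t\}$ or $\{s,v\}$ could conceivably already be edges of $G$, a point to address).

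Next I would construct $G'= G - \{\{u,v\},\{s,t\}\} + \{\{u,t\},\{s,v\}\}$ together with an assembly design $\lambda'$ that agrees with $\lambda$ on every half-edge not involved in the swap, and that assigns the $a$-labeled half-edge at $u$ to pair with the $\hat a$-labeled half-edge at $t$, and the $a$-labeled half-edge at $s$ to pair with the $\hat a$-labeled half-edge at $v$. The key observation is that the multiset of half-edge labels incident to each vertex is unchanged: $u$ still has an outgoing $a$, $v$ still has an incoming $\hat a$, etc. Hence every vertex of $G'$ is assigned exactly the same tile type under $\lambda'$ as it was under $\lambda$, so $P_{\lambda'}(G') \subseteq P_\lambda(G) \subseteq P$, and therefore $G' \in \mathcal{O}(P)$. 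Since $\#V(G') = \#V(G)$, the Scenario 3 constraint on $P$ forces $G' \cong G$, which is exactly the claim.

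The main obstacle — really the only subtlety — is the case analysis around whether the new edges already exist or whether $G'$ fails to be simple. If, say, $\{u,t\}$ is already an edge of $G$, then $G'$ as written would be a multigraph; but this is fine within the model, since the excerpt explicitly allows self-assembly to produce graphs with multiple edges or self-loops, and such a $G'$ still lies in $\mathcal{O}(P)$. I would remark that if $\{u,t\}$ or $\{s,v\}$ is already present, or if the swap were to create a loop (which it cannot here, as $u \ne t$ and $s \ne v$), then $G'$ is a non-simple graph of the same order as $G$; since $G$ is simple, $G' \not\cong G$, contradicting Scenario 3 — so in fact this situation cannot arise for a valid pot, and otherwise $G'$ is a genuine simple graph and the isomorphism conclusion stands. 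I would keep this part brief, emphasizing that the heart of the argument is the label-preserving bijection between the tiles used for $G$ and those used for $G'$.
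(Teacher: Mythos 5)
Your proof is correct and is essentially the argument the paper relies on (the lemma is quoted from Ellis-Monaghan et al., and the paper's own justification is exactly your observation that the swap leaves every vertex's tile type unchanged, so $G' \in \mathcal{O}(P)$ and Scenario 3 forces $G' \cong G$). Your side remark about the multigraph case is a sensible extra precaution and does not change the argument.
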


Since the newly formed graph must also be realized by the pot, the new graph must be isomorphic to the target graph (as long as the pot fulfills Scenario 3). However, in a given graph $G$, it is not necessarily the case that two edges can be swapped to create an isomorphic graph. Motivated by the above, we define the following property of graphs.

\begin{definition}
We say that a graph $G$ is \textbf{unswappable} if, for all pairs of disjoint edges $\{u, v\}$ and $\{s, t\}$, $G$ is not isomorphic to $G' = G - \{\{u, v\},\{s, t\}\} + \{\{u, t\}, \{s, v\}\}$. We only refer to graphs as unswappable if they have at least 4 vertices, since smaller graphs cannot have disjoint edges to swap. Otherwise, the graph is called \textbf{swappable}.
\end{definition}

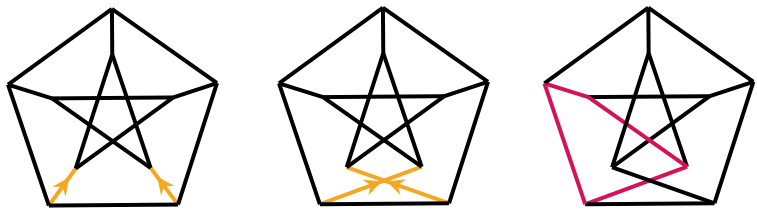
\begin{figure}[H]
        \centering
        \tikzset{every picture/.style={line width=0.75pt}} 

\begin{tikzpicture}[x=0.75pt,y=0.75pt,yscale=-1,xscale=1]

\draw [line width=1.5]    (93.99,13.03) -- (94.17,36) ;
\draw [line width=1.5]    (147,50.44) -- (124.93,57.7) ;
\draw [color={rgb, 255:red, 245; green, 166; blue, 35 }  ,draw opacity=1 ][line width=1.5]    (75.72,93.57) -- (62.19,112.25) ;
\draw [shift={(72.41,98.13)}, rotate = 125.91] [fill={rgb, 255:red, 245; green, 166; blue, 35 }  ,fill opacity=1 ][line width=0.08]  [draw opacity=0] (8.75,-4.2) -- (0,0) -- (8.75,4.2) -- (5.81,0) -- cycle    ;
\draw [line width=1.5]    (41.58,51.24) -- (63.76,58.17) ;
\draw [color={rgb, 255:red, 245; green, 166; blue, 35 }  ,draw opacity=1 ][line width=1.5]    (113.52,93.29) -- (127.34,111.76) ;
\draw [shift={(116.9,97.8)}, rotate = 53.21] [fill={rgb, 255:red, 245; green, 166; blue, 35 }  ,fill opacity=1 ][line width=0.08]  [draw opacity=0] (8.75,-4.2) -- (0,0) -- (8.75,4.2) -- (5.81,0) -- cycle    ;
\draw [line width=1.5]    (41.58,51.24) -- (93.99,13.03) ;
\draw [line width=1.5]    (75.72,93.57) -- (124.93,57.7) ;
\draw [line width=1.5]    (63.76,58.17) -- (124.93,57.7) ;
\draw [line width=1.5]    (94.17,36) -- (113.52,93.29) ;
\draw [line width=1.5]    (62.19,112.25) -- (127.34,111.76) ;
\draw [line width=1.5]    (93.99,13.03) -- (147,50.44) ;
\draw [line width=1.5]    (147,50.44) -- (127.34,111.76) ;
\draw [line width=1.5]    (41.58,51.24) -- (62.19,112.25) ;
\draw [line width=1.5]    (63.76,58.17) -- (113.52,93.29) ;
\draw [line width=1.5]    (75.72,93.57) -- (94.17,36) ;
\draw [line width=1.5]    (230.67,12.39) -- (230.85,35.35) ;
\draw [line width=1.5]    (283.67,49.79) -- (261.61,57.06) ;
\draw [color={rgb, 255:red, 245; green, 166; blue, 35 }  ,draw opacity=1 ][line width=1.5]    (250.2,92.64) -- (198.87,111.61) ;
\draw [shift={(230.07,100.08)}, rotate = 159.72] [fill={rgb, 255:red, 245; green, 166; blue, 35 }  ,fill opacity=1 ][line width=0.08]  [draw opacity=0] (8.75,-4.2) -- (0,0) -- (8.75,4.2) -- (5.81,0) -- cycle    ;
\draw [line width=1.5]    (178.26,50.59) -- (200.44,57.52) ;
\draw [color={rgb, 255:red, 245; green, 166; blue, 35 }  ,draw opacity=1 ][line width=1.5]    (212.39,92.93) -- (264.02,111.11) ;
\draw [shift={(232.64,100.06)}, rotate = 19.41] [fill={rgb, 255:red, 245; green, 166; blue, 35 }  ,fill opacity=1 ][line width=0.08]  [draw opacity=0] (8.75,-4.2) -- (0,0) -- (8.75,4.2) -- (5.81,0) -- cycle    ;
\draw [line width=1.5]    (178.26,50.59) -- (230.67,12.39) ;
\draw [line width=1.5]    (212.39,92.93) -- (261.61,57.06) ;
\draw [line width=1.5]    (200.44,57.52) -- (261.61,57.06) ;
\draw [line width=1.5]    (230.85,35.35) -- (250.2,92.64) ;
\draw [line width=1.5]    (198.87,111.61) -- (264.02,111.11) ;
\draw [line width=1.5]    (230.67,12.39) -- (283.67,49.79) ;
\draw [line width=1.5]    (283.67,49.79) -- (264.02,111.11) ;
\draw [line width=1.5]    (178.26,50.59) -- (198.87,111.61) ;
\draw [line width=1.5]    (200.44,57.52) -- (250.2,92.64) ;
\draw [line width=1.5]    (212.39,92.93) -- (230.85,35.35) ;
\draw [line width=1.5]    (364.52,12.39) -- (364.69,35.35) ;
\draw [line width=1.5]    (417.52,49.79) -- (395.45,57.06) ;
\draw [color={rgb, 255:red, 212; green, 17; blue, 89 }  ,draw opacity=1 ][line width=1.5]    (384.04,92.64) -- (332.71,111.61) ;
\draw [color={rgb, 255:red, 212; green, 17; blue, 89 }  ,draw opacity=1 ][line width=1.5]    (312.1,50.59) -- (334.28,57.52) ;
\draw [color={rgb, 255:red, 0; green, 0; blue, 0 }  ,draw opacity=1 ][line width=1.5]    (346.24,92.93) -- (397.86,111.11) ;
\draw [line width=1.5]    (312.1,50.59) -- (364.52,12.39) ;
\draw [line width=1.5]    (346.24,92.93) -- (395.45,57.06) ;
\draw [line width=1.5]    (334.28,57.52) -- (395.45,57.06) ;
\draw [line width=1.5]    (364.69,35.35) -- (384.04,92.64) ;
\draw [line width=1.5]    (332.71,111.61) -- (397.86,111.11) ;
\draw [line width=1.5]    (364.52,12.39) -- (417.52,49.79) ;
\draw [line width=1.5]    (417.52,49.79) -- (397.86,111.11) ;
\draw [color={rgb, 255:red, 212; green, 17; blue, 89 }  ,draw opacity=1 ][line width=1.5]    (312.1,50.59) -- (332.71,111.61) ;
\draw [color={rgb, 255:red, 212; green, 17; blue, 89 }  ,draw opacity=1 ][line width=1.5]    (334.28,57.52) -- (384.04,92.64) ;
\draw [line width=1.5]    (346.24,92.93) -- (364.69,35.35) ;

\end{tikzpicture}
        \caption{Illustration that the Petersen graph is unswappable.}
        \label{petersen_unswap}
    \end{figure}

\begin{figure}[H]
        \centering
        \tikzset{every picture/.style={line width=0.75pt}} 

\begin{tikzpicture}[x=0.75pt,y=0.75pt,yscale=-1,xscale=1]

\draw [color={rgb, 255:red, 245; green, 166; blue, 35 }  ,draw opacity=1 ][line width=1.5]    (40.2,100.6) -- (40.2,30.8) ;
\draw [shift={(40.2,61.3)}, rotate = 90] [fill={rgb, 255:red, 245; green, 166; blue, 35 }  ,fill opacity=1 ][line width=0.08]  [draw opacity=0] (8.75,-4.2) -- (0,0) -- (8.75,4.2) -- (5.81,0) -- cycle    ;
\draw [color={rgb, 255:red, 245; green, 166; blue, 35 }  ,draw opacity=1 ][line width=1.5]    (110,100.6) -- (110,30.8) ;
\draw [shift={(110,61.3)}, rotate = 90] [fill={rgb, 255:red, 245; green, 166; blue, 35 }  ,fill opacity=1 ][line width=0.08]  [draw opacity=0] (8.75,-4.2) -- (0,0) -- (8.75,4.2) -- (5.81,0) -- cycle    ;
\draw [color={rgb, 255:red, 245; green, 166; blue, 35 }  ,draw opacity=1 ][line width=1.5]    (160.2,100.6) -- (230,30.8) ;
\draw [shift={(195.1,65.7)}, rotate = 135] [fill={rgb, 255:red, 245; green, 166; blue, 35 }  ,fill opacity=1 ][line width=0.08]  [draw opacity=0] (8.75,-4.2) -- (0,0) -- (8.75,4.2) -- (5.81,0) -- cycle    ;
\draw [color={rgb, 255:red, 245; green, 166; blue, 35 }  ,draw opacity=1 ][line width=1.5]    (230,100.6) -- (160.2,30.8) ;
\draw [shift={(195.1,65.7)}, rotate = 45] [fill={rgb, 255:red, 245; green, 166; blue, 35 }  ,fill opacity=1 ][line width=0.08]  [draw opacity=0] (8.75,-4.2) -- (0,0) -- (8.75,4.2) -- (5.81,0) -- cycle    ;
\draw [line width=1.5]    (110,100.6) -- (40.2,100.6) ;
\draw [line width=1.5]    (230,100.6) -- (160.2,100.6) ;
\draw [line width=1.5]    (230,30.8) -- (160.2,30.8) ;
\draw [line width=1.5]    (110,30.8) -- (40.2,30.8) ;

\draw (127,53.8) node [anchor=north west][inner sep=0.75pt]    {$\simeq $};
\end{tikzpicture}
        \caption{Illustration that the cycle graph $C_4$ is swappable.}
        \label{4cycleswap}
    \end{figure}
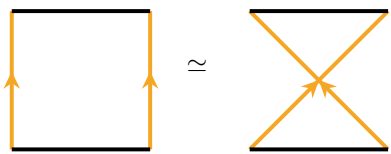

Examples of unswappable and swappable graphs are shown in Figures \ref{petersen_unswap} and \ref{4cycleswap}, respectively. In the case of the Petersen graph, the smallest cycle has length 5, but swapping any two disjoint edges creates a cycle of length 4 \cite{https://doi.org/10.1112/blms.12154}, making this graph unswappable. In the case of the 4-cycle, swapping opposing edges results in an isomorphic graph, making this graph swappable.

Graphs with the unswappable property can thus never fulfill the hypothesis of Lemma \ref{swappinglemma}, since they cannot be isomorphic to any graph produced by a swap. Therefore, if $P$ is a pot that realizes an unswappable graph $G$ in Scenario 3, then two disjoint edges cannot use the same bond-edge type. Note that this is essentially equivalent to using the contrapositive of the above lemma. We may extend this argument to the following:

\begin{proposition} \label{prop: allsinks}
    If a pot $P$ realizes an unswappable graph $G$ in Scenario 3, then either all half-edges of a given cohesive-end type $a \in \Sigma(P)$ or all half-edges of its complement $\hat{a}$ will appear in a single tile type. 
\end{proposition}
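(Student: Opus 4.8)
The plan is to argue by contradiction using the swap property. Suppose that for some cohesive-end type $a \in \Sigma(P)$, there exist two distinct tile types $t_1, t_2$ in $P$ such that $t_1$ contains a half-edge labeled $a$ and $t_2$ contains a half-edge labeled $a$, and similarly there exist two distinct tile types $t_3, t_4$ each containing a half-edge labeled $\hat{a}$. (The claim is that this cannot happen — i.e. either the $a$'s are all confined to one tile type, or the $\hat{a}$'s are all confined to one tile type.) First I would set up notation: in the assembly design $\lambda$ realizing $G$, pick an edge $e = \{u,v\}$ of bond-edge type $a$ where $\lambda(u) = t_1$ supplies the $a$-end and $\lambda(v)$ supplies the $\hat{a}$-end. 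The subtlety is that I need a \emph{second} edge of bond-edge type $a$ that is disjoint from $e$ and whose endpoints are "different enough" that swapping produces something non-isomorphic — but $G$ is unswappable, so \emph{any} such disjoint pair already yields a contradiction. So the real content is: from the hypothesis that $a$-ends and $\hat{a}$-ends each spread across $\ge 2$ tile types, I must exhibit, in \emph{some} assembly design realizing $G$ (or a graph in $\mathcal{O}(P)$ of order $\le \#V(G)$), two disjoint edges both of bond-edge type $a$.

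Here is where I would be careful. The hypothesis is about the pot $P$, not directly about $G$'s assembly design. So the cleaner route: since $t_1 \neq t_2$ both contain $a$, and $t_3 \neq t_4$ both contain $\hat{a}$, I can build a new graph $G'$ on a vertex set realized by $P$ — but I want $G' \in \mathcal{O}(P)$ with $\#V(G') \le \#V(G)$ so that Scenario 3 forces $G' \cong G$, and then I want $G'$ to contain two disjoint edges of type $a$, contradicting unswappability of $G$ (via the contrapositive of Lemma \ref{swappinglemma}, which as noted says an unswappable $G$ realized in Scenario 3 has no two disjoint edges of the same bond-edge type). Actually the more direct approach: work entirely inside the fixed assembly design $\lambda$ of $G$. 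Count: in $G$'s assembly, each edge of bond-edge type $a$ uses exactly one $a$-half-edge and one $\hat{a}$-half-edge. If every edge of type $a$ is incident to a common vertex, then all those edges share an endpoint, so $G$ has no two disjoint edges of type $a$ — fine, that's consistent. The interesting case is when $G$ \emph{does} have edges of type $a$ not all through one vertex; then by unswappability plus Lemma \ref{swappinglemma} (contrapositive), $G$ has \emph{at most one} edge of each bond-edge type, OR all edges of a given type share a vertex.

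So I would structure it as: Step 1 — recall/restate that an unswappable $G$ realized by $P$ in Scenario 3 cannot have two disjoint edges of the same bond-edge type (immediate from Lemma \ref{swappinglemma} contrapositive). Step 2 — fix bond-edge type $a$; let $E_a$ be the set of edges of $G$ assigned type $a$ in a fixed assembly design. By Step 1, the edges in $E_a$ are pairwise non-disjoint, i.e. they form a star or a triangle. Step 3 — if $|E_a| \le 1$ the claim is trivial; if $E_a$ is a triangle, then it has 3 vertices and the $a$-ends (2 of them? — here I need to recount: a triangle with all edges type $a$ uses some arrangement of $a$'s and $\hat{a}$'s on 3 vertices, and one checks the half-edges of each type live on at most two vertices, hence... hmm, a triangle on $\{x,y,z\}$: each vertex has two half-edges from the triangle; orientations must pair up, so it's a directed triangle or has one source/one sink — in the directed-cycle case each vertex has one $a$ and one $\hat{a}$, so $a$-ends on all 3 and $\hat{a}$-ends on all 3, which would \emph{violate} the claim!). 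This is the main obstacle: I must rule out the triangle case, presumably because a directed triangle of type $a$ can be swapped — but its edges aren't disjoint. I suspect the resolution is that a directed 3-cycle in type $a$ lets you also realize a smaller graph (a 2-cycle / multigraph on 2 vertices, or a single loop plus...), contradicting Scenario 3; or one uses that the same pot could realize $G$ with a double edge, reducing order. I would need to check this "triangle reconfiguration" carefully — re-matching the three $a$-$\hat a$ pairs in a 3-cycle can produce a vertex matched to itself (a loop) together with a single 2-cycle, which has fewer vertices, violating Scenario 3.

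Step 4 — the remaining case: $E_a$ is a star centered at some vertex $w$. Then every edge of type $a$ is incident to $w$, so $w$'s tile type $\lambda(w)$ contains all the half-edges — either all the $a$-ends of these edges (if $w$ supplies the $a$-end of each) or all the $\hat a$-ends, or a mix. To finish I must upgrade "incident to $w$" to "same \emph{tile type} at all those endpoints", and argue that the non-$w$ endpoints can't introduce a stray $a$- or $\hat a$-end in a different tile type — but those endpoints by definition of $E_a$ only receive the \emph{opposite} half-edge-end from what $w$ gives, and there are no edges of type $a$ outside $E_a$. So if $w$ is the $a$-end of every edge in $E_a$, then every $a$-half-edge used in $G$ is in tile type $\lambda(w)$; a pot half-edge type that is never used in some assembly design is irrelevant, so (choosing $\lambda$ appropriately, or arguing $P$ wouldn't include unused types in an optimal pot — or just restricting attention to types that appear) we get all $a$-ends in one tile type. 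If $w$ is sometimes the $a$-end and sometimes the $\hat a$-end among edges of $E_a$, I'd reorient to reduce to one case, or observe this still forces the other class ($\hat a$) into $\lambda(w)$ wherever $w$ is the $\hat a$-end — the cleanest statement being: all $a$-ends \emph{or} all $\hat a$-ends lie in the single tile type $\lambda(w)$, possibly after noting that a "mixed" star at $w$ also creates a swap-type contradiction. I expect the triangle case (Step 3) and the bookkeeping of "mixed orientation stars" to be the two fiddly points; the skeleton (unswappable $\Rightarrow$ type-$a$ edges form a star/triangle $\Rightarrow$ concentrate in one tile) is straightforward.
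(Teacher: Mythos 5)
Your core approach is the same as the paper's: invoke the contrapositive of Lemma \ref{swappinglemma} to conclude that no two disjoint edges of $G$ can carry the same bond-edge type, and then argue that the type-$a$ edges must concentrate at a single vertex/tile. The paper's own proof is a three-sentence contradiction argument: it supposes two \emph{non-adjacent} vertices both carry an $a$ half-edge, produces two non-incident type-$a$ edges, swaps them, and is done. What you add — and this is genuinely valuable — is the observation that pairwise-intersecting type-$a$ edges form a star \emph{or a triangle}, and that the triangle case (and the ``mixed-orientation star'') is not excluded by the paper's argument, since there the carriers of $a$ are pairwise adjacent and no disjoint pair of type-$a$ edges exists. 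These configurations really can violate the stated conclusion (e.g.\ a directed $3$-cycle of type $a$ puts both $a$'s and $\hat a$'s on three distinct vertices), so they must be ruled out, and your proposed fix is the right one: re-match the $a$/$\hat a$ half-edges within the triangle (or within a mixed star) to produce a loop and a doubled edge. Your one slip is calling the resulting complex a graph ``with fewer vertices'' — it has the same vertex set, hence the same order; the correct conclusion is that it is a nonisomorphic graph of \emph{equal} order (it contains a loop while $G$ is simple), which still violates Scenario 3. Your remaining bookkeeping (a star centered at $w$ with all $a$-ends, or all $\hat a$-ends, at $w$ places that entire cohesive-end class in the single tile type $\lambda(w)$; unused cohesive-end types in $P$ are irrelevant) matches what the paper asserts without proof in the paragraph following Proposition \ref{prop: allsinks}. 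In short: same skeleton, but your version closes cases the published proof silently skips.
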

\begin{proof}
Let $P$ realize an unswappable graph $G$ with assembly design $P_\lambda(G)$. Suppose, for contradiction, that there exist two non-adjacent vertices $v_1,v_2$ such that $a \in \lambda(v_1)$ and $a \in \lambda(v_2)$ (that is, these vertices both have half-edges labeled with the same cohesive-end type). Then there exist two non-incident edges $e_1$ and $e_2$ in $G$ (where $v_1$ and $v_2$ are endpoints of these edges, respectively) labeled with bond-edge type $a$. If these bond-edges are ``broken" and  the corresponding half-edges re-join in the alternate manner, we have ``swapped" two edges in $G$ and a nonisomorphic graph $G'$ is formed. The tile types of $P$ remain unchanged, and thus $G' \in \mathcal{O}(P)$ and $P$ is invalid in Scenario 3.
\end{proof}

Note that replacing all $a$'s with $\hat{a}$'s and vice versa in a pot does not change the fundamental structure of the pot. Thus, without loss of generality, we may assume by Proposition \ref{prop: allsinks} that for each bond-edge type $a$ in a pot realizing an unswappable graph, all half-edges of cohesive-end type $a$ are on a single tile type. In particular, the argument used in the proof of Proposition \ref{prop: allsinks} shows that this tile corresponds to a single vertex in the graph in order to prevent the existence of two disjoint edges labeled with bond-edge type $a$. For a given bond-edge type $a$, we refer to this vertex (incident to all edges labeled with cohesive-end type $a$) as the \emph{source} of $a$. This also implies that the tile type with all half-edges of cohesive-end type $a$ can only be used once in an assembly design of an unswappable graph. A labeling of a graph illustrating these constraints is shown in Figure \ref{puppy}.

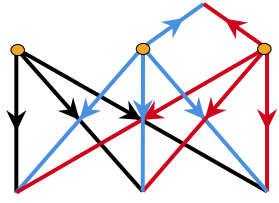
\begin{figure}[H]
        \centering
        \begin{tikzpicture}[x=0.75pt,y=0.75pt,yscale=-.8,xscale=.8]

\draw [line width=1.5]    (99.4,69.77) -- (99.4,160.6) ;
\draw [shift={(99.4,121.98)}, rotate = 270] [fill={rgb, 255:red, 0; green, 0; blue, 0 }  ][line width=0.08]  [draw opacity=0] (13.4,-6.43) -- (0,0) -- (13.4,6.44) -- (8.9,0) -- cycle    ;
\draw [color={rgb, 255:red, 208; green, 2; blue, 27 }  ,draw opacity=1 ][line width=1.5]    (257.45,69.06) -- (257.45,159.9) ;
\draw [shift={(257.45,121.28)}, rotate = 270] [fill={rgb, 255:red, 208; green, 2; blue, 27 }  ,fill opacity=1 ][line width=0.08]  [draw opacity=0] (13.4,-6.43) -- (0,0) -- (13.4,6.44) -- (8.9,0) -- cycle    ;
\draw [line width=1.5]    (99.4,69.77) -- (179.08,159.9) ;
\draw [shift={(139.24,114.83)}, rotate = 228.52] [fill={rgb, 255:red, 0; green, 0; blue, 0 }  ][line width=0.08]  [draw opacity=0] (13.4,-6.43) -- (0,0) -- (13.4,6.44) -- (8.9,0) -- cycle    ;
\draw [line width=1.5]    (99.4,69.77) -- (257.45,159.9) ;
\draw [shift={(178.42,114.83)}, rotate = 209.69] [fill={rgb, 255:red, 0; green, 0; blue, 0 }  ][line width=0.08]  [draw opacity=0] (13.4,-6.43) -- (0,0) -- (13.4,6.44) -- (8.9,0) -- cycle    ;
\draw [color={rgb, 255:red, 74; green, 144; blue, 226 }  ,draw opacity=1 ][line width=1.5]    (179.08,69.06) -- (99.4,160.6) ;
\draw [shift={(139.24,114.83)}, rotate = 311.04] [fill={rgb, 255:red, 74; green, 144; blue, 226 }  ,fill opacity=1 ][line width=0.08]  [draw opacity=0] (13.4,-6.43) -- (0,0) -- (13.4,6.44) -- (8.9,0) -- cycle    ;
\draw [color={rgb, 255:red, 208; green, 2; blue, 27 }  ,draw opacity=1 ][line width=1.5]    (257.45,69.06) -- (99.4,160.6) ;
\draw [shift={(178.42,114.83)}, rotate = 329.92] [fill={rgb, 255:red, 208; green, 2; blue, 27 }  ,fill opacity=1 ][line width=0.08]  [draw opacity=0] (13.4,-6.43) -- (0,0) -- (13.4,6.44) -- (8.9,0) -- cycle    ;
\draw [color={rgb, 255:red, 74; green, 144; blue, 226 }  ,draw opacity=1 ][line width=1.5]    (257.45,159.9) -- (179.08,69.06) ;
\draw [shift={(218.26,114.48)}, rotate = 229.21] [fill={rgb, 255:red, 74; green, 144; blue, 226 }  ,fill opacity=1 ][line width=0.08]  [draw opacity=0] (13.4,-6.43) -- (0,0) -- (13.4,6.44) -- (8.9,0) -- cycle    ;
\draw [color={rgb, 255:red, 208; green, 2; blue, 27 }  ,draw opacity=1 ][line width=1.5]    (257.45,69.06) -- (179.08,159.9) ;
\draw [shift={(218.26,114.48)}, rotate = 310.79] [fill={rgb, 255:red, 208; green, 2; blue, 27 }  ,fill opacity=1 ][line width=0.08]  [draw opacity=0] (13.4,-6.43) -- (0,0) -- (13.4,6.44) -- (8.9,0) -- cycle    ;
\draw [color={rgb, 255:red, 74; green, 144; blue, 226 }  ,draw opacity=1 ][line width=1.5]    (217.5,41.6) -- (179.08,69.06) ;
\draw [shift={(205.04,50.5)}, rotate = 144.44] [fill={rgb, 255:red, 74; green, 144; blue, 226 }  ,fill opacity=1 ][line width=0.08]  [draw opacity=0] (13.4,-6.43) -- (0,0) -- (13.4,6.44) -- (8.9,0) -- cycle    ;
\draw [color={rgb, 255:red, 208; green, 2; blue, 27 }  ,draw opacity=1 ][line width=1.5]    (217.5,41.6) -- (257.45,69.77) ;
\draw [shift={(230.69,50.9)}, rotate = 35.18] [fill={rgb, 255:red, 208; green, 2; blue, 27 }  ,fill opacity=1 ][line width=0.08]  [draw opacity=0] (13.4,-6.43) -- (0,0) -- (13.4,6.44) -- (8.9,0) -- cycle    ;
\draw  [fill={rgb, 255:red, 245; green, 166; blue, 35 }  ,fill opacity=1 ] (96,70.75) .. controls (96,68.81) and (97.9,67.23) .. (100.25,67.23) .. controls (102.6,67.23) and (104.5,68.81) .. (104.5,70.75) .. controls (104.5,72.7) and (102.6,74.27) .. (100.25,74.27) .. controls (97.9,74.27) and (96,72.7) .. (96,70.75) -- cycle ;
\draw  [fill={rgb, 255:red, 245; green, 166; blue, 35 }  ,fill opacity=1 ] (251.5,70.05) .. controls (251.5,68.1) and (253.4,66.53) .. (255.75,66.53) .. controls (258.1,66.53) and (260,68.1) .. (260,70.05) .. controls (260,71.99) and (258.1,73.57) .. (255.75,73.57) .. controls (253.4,73.57) and (251.5,71.99) .. (251.5,70.05) -- cycle ;
\draw [color={rgb, 255:red, 74; green, 144; blue, 226 }  ,draw opacity=1 ][line width=1.5]    (179.27,70.75) -- (179.08,159.9) ;
\draw [shift={(179.18,115.32)}, rotate = 270.13] [fill={rgb, 255:red, 74; green, 144; blue, 226 }  ,fill opacity=1 ][line width=0.08]  [draw opacity=0] (13.4,-6.43) -- (0,0) -- (13.4,6.44) -- (8.9,0) -- cycle    ;
\draw  [fill={rgb, 255:red, 245; green, 166; blue, 35 }  ,fill opacity=1 ] (175.03,70.05) .. controls (175.03,68.1) and (176.93,66.53) .. (179.27,66.53) .. controls (181.62,66.53) and (183.52,68.1) .. (183.52,70.05) .. controls (183.52,71.99) and (181.62,73.57) .. (179.27,73.57) .. controls (176.93,73.57) and (175.03,71.99) .. (175.03,70.05) -- cycle ;

\end{tikzpicture}
        \caption{Graph labeling in accordance with Proposition \ref{prop: allsinks}. Source vertices are identified in orange.}
        \label{puppy}
    \end{figure}

When an unswappable graph $G$ is realized by a pot, we will denote the set of all source vertices by $S$. Note that each edge $e \in E(G)$ must be labeled with some bond-edge type $a$, and therefore $e$ is incident to $a$'s source vertex. This motivates the introduction of the following definition, commonly used in graph theory:

\begin{definition}
    A \textbf{vertex cover} of a graph $G$ is a set of vertices $K\subseteq V(G)$ such that every edge is incident to at least one vertex in $K$. An example may be found in Figure \ref{petersen_vc}. 
\end{definition}

\begin{figure}[H]
        \centering
        \tikzset{every picture/.style={line width=0.75pt}} 

\begin{tikzpicture}[x=0.75pt,y=0.75pt,yscale=-1,xscale=1]

\draw [line width=1.5]    (135.73,16.79) -- (136,52.36) ;
\draw [line width=1.5]    (216.78,74.73) -- (183.03,85.98) ;
\draw [line width=1.5]    (107.78,141.55) -- (87.09,170.48) ;
\draw [line width=1.5]    (55.58,75.97) -- (89.49,86.7) ;
\draw [line width=1.5]    (165.59,141.1) -- (186.72,169.72) ;
\draw [line width=1.5]    (55.58,75.97) -- (135.73,16.79) ;
\draw [line width=1.5]    (107.78,141.55) -- (183.03,85.98) ;
\draw [line width=1.5]    (89.49,86.7) -- (183.03,85.98) ;
\draw [line width=1.5]    (136,52.36) -- (165.59,141.1) ;
\draw [line width=1.5]    (87.09,170.48) -- (186.72,169.72) ;
\draw [line width=1.5]    (135.73,16.79) -- (216.78,74.73) ;
\draw [line width=1.5]    (216.78,74.73) -- (186.72,169.72) ;
\draw [line width=1.5]    (55.58,75.97) -- (87.09,170.48) ;
\draw [line width=1.5]    (89.49,86.7) -- (165.59,141.1) ;
\draw [line width=1.5]    (107.78,141.55) -- (136,52.36) ;
\draw  [fill={rgb, 255:red, 155; green, 155; blue, 155 }  ,fill opacity=1 ][line width=1.5]  (83.3,86.03) .. controls (83.3,82.61) and (86.07,79.84) .. (89.49,79.84) .. controls (92.91,79.84) and (95.69,82.61) .. (95.69,86.03) .. controls (95.69,89.45) and (92.91,92.23) .. (89.49,92.23) .. controls (86.07,92.23) and (83.3,89.45) .. (83.3,86.03) -- cycle ;
\draw  [fill={rgb, 255:red, 155; green, 155; blue, 155 }  ,fill opacity=1 ][line width=1.5]  (170.65,85.98) .. controls (170.65,82.56) and (173.42,79.79) .. (176.84,79.79) .. controls (180.26,79.79) and (183.03,82.56) .. (183.03,85.98) .. controls (183.03,89.4) and (180.26,92.17) .. (176.84,92.17) .. controls (173.42,92.17) and (170.65,89.4) .. (170.65,85.98) -- cycle ;
\draw  [fill={rgb, 255:red, 155; green, 155; blue, 155 }  ,fill opacity=1 ][line width=1.5]  (80.9,169.81) .. controls (80.9,166.39) and (83.67,163.62) .. (87.09,163.62) .. controls (90.51,163.62) and (93.29,166.39) .. (93.29,169.81) .. controls (93.29,173.23) and (90.51,176.01) .. (87.09,176.01) .. controls (83.67,176.01) and (80.9,173.23) .. (80.9,169.81) -- cycle ;
\draw  [fill={rgb, 255:red, 155; green, 155; blue, 155 }  ,fill opacity=1 ][line width=1.5]  (180.53,169.72) .. controls (180.53,166.3) and (183.3,163.52) .. (186.72,163.52) .. controls (190.14,163.52) and (192.91,166.3) .. (192.91,169.72) .. controls (192.91,173.14) and (190.14,175.91) .. (186.72,175.91) .. controls (183.3,175.91) and (180.53,173.14) .. (180.53,169.72) -- cycle ;
\draw  [fill={rgb, 255:red, 155; green, 155; blue, 155 }  ,fill opacity=1 ][line width=1.5]  (129.81,52.36) .. controls (129.81,48.94) and (132.58,46.17) .. (136,46.17) .. controls (139.42,46.17) and (142.19,48.94) .. (142.19,52.36) .. controls (142.19,55.78) and (139.42,58.55) .. (136,58.55) .. controls (132.58,58.55) and (129.81,55.78) .. (129.81,52.36) -- cycle ;
\draw  [fill={rgb, 255:red, 155; green, 155; blue, 155 }  ,fill opacity=1 ][line width=1.5]  (129.54,16.79) .. controls (129.54,13.37) and (132.31,10.6) .. (135.73,10.6) .. controls (139.15,10.6) and (141.92,13.37) .. (141.92,16.79) .. controls (141.92,20.21) and (139.15,22.99) .. (135.73,22.99) .. controls (132.31,22.99) and (129.54,20.21) .. (129.54,16.79) -- cycle ;
\end{tikzpicture}
        \caption{A minimum vertex cover on the Petersen graph.}
        \label{petersen_vc}
    \end{figure}
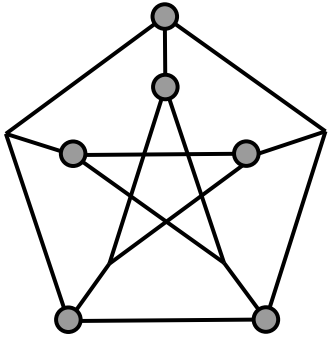

We can use the previous logic to reach the following corollary of Proposition \ref{prop: allsinks}: 
\begin{corollary}
The set $S$ of all source vertices of a labeled graph $G$ must be a vertex cover of $G$.
\end{corollary}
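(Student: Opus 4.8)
The plan is to derive this directly from Proposition \ref{prop: allsinks} together with the normalization described immediately after it. So I would set up the hypotheses explicitly: let $G$ be an unswappable graph realized by a pot $P$ in Scenario 3 via an assembly design $\lambda$, and recall that — after replacing some cohesive-end types by their complements if necessary, which does not change the pot — each bond-edge type $a \in \Sigma(P)$ has a well-defined \emph{source} vertex, namely the unique vertex incident to every edge of $G$ labeled with bond-edge type $a$. Then $S$ is, by definition, the collection of these source vertices, one for each bond-edge type appearing in the labeling.

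Next I would take an arbitrary edge $e \in E(G)$ and observe that, since $\lambda$ is an assembly design, the two half-edges of $e$ must be assigned a complementary pair of cohesive-end labels; equivalently, $e$ carries some bond-edge type $a \in \Sigma(P)$. By the source property, the source of $a$ is an endpoint of $e$, so $e$ is incident to a vertex lying in $S$. Since $e$ was arbitrary, every edge of $G$ is incident to at least one vertex of $S$, which is precisely the definition of a vertex cover; hence $S$ is a vertex cover of $G$, as claimed.

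There is essentially no obstacle here: this is a short corollary whose entire content is already contained in Proposition \ref{prop: allsinks} and the refinement that the tile holding all copies of a cohesive-end type $a$ sits at a single vertex. The only points requiring a word of care are (i) making explicit that the complement-swapping normalization has been applied so that "source" is well defined for \emph{every} bond-edge type in the labeling, and (ii) noting that every edge does carry a bond-edge type, which is immediate from the definition of an assembly design. With those two remarks in place, the proof is a one-line unwinding of definitions.
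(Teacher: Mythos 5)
Your proof is correct and follows exactly the paper's own reasoning: the paper justifies this corollary by noting that every edge carries some bond-edge type $a$ and is therefore incident to $a$'s source vertex, which is precisely your argument (including the same appeal to the complement-swapping normalization after Proposition \ref{prop: allsinks}). No issues.
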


This means that all assembly designs corresponding to pots that realize unswappable graphs have an underlying structure of a vertex cover, which is a very helpful constraint on the construction of such pots. This allows us to extend our earlier argument to obtain the following lower bound:

\begin{theorem}\label{b3lowerbound}
Let $G$ be unswappable and $K$ be a minimum vertex cover of $G$. Then, $B_3(G) \geq |K|$.
\end{theorem}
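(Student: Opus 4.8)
The plan is to combine Proposition~\ref{prop: allsinks}, the Corollary following it, and a short counting argument on source vertices. Fix an arbitrary pot $P$ that realizes $G$ in Scenario 3, together with an assembly design $\lambda$ and its assembling pot $P_\lambda(G)$. Since $G$ is unswappable, the normalization discussed after Proposition~\ref{prop: allsinks} applies: for each bond-edge type $a$ used by $\lambda$, every half-edge of cohesive-end type $a$ lies on a single tile, which is the tile of a single vertex --- the source of $a$. In particular, the rule sending each bond-edge type used by $\lambda$ to its source vertex is a well-defined function.

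First I would record the two inequalities that drive the bound. Let $\Sigma_\lambda \subseteq \Sigma(P)$ denote the set of bond-edge types actually used by $\lambda$, and let $S$ be the set of source vertices. The source-vertex map $\Sigma_\lambda \to S$ is surjective, by the very definition of $S$ as the set of \emph{all} source vertices, so $|\Sigma_\lambda| \geq |S|$. (Note this is only an inequality: a single vertex can be the source of several distinct bond-edge types.) On the other hand, the Corollary of Proposition~\ref{prop: allsinks} tells us that $S$ is a vertex cover of $G$, and since $K$ is a \emph{minimum} vertex cover, $|S| \geq |K|$. Chaining these with the trivial bound $\#\Sigma(P) \geq |\Sigma_\lambda|$ gives $\#\Sigma(P) \geq |K|$.

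Finally, since $P$ was an arbitrary pot realizing $G$ in Scenario 3, taking the minimum of $\#\Sigma(P)$ over all such pots yields $B_3(G) \geq |K|$, which is the claim.

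As for difficulty, there is really no serious obstacle here --- all of the substantive content is already carried by Proposition~\ref{prop: allsinks} and its Corollary, so the proof is essentially bookkeeping. The one point that warrants care is that the correspondence ``bond-edge type $\mapsto$ source vertex'' need not be injective, so the argument only yields $\#\Sigma(P) \geq |S|$ rather than an equality; this is exactly what one wants for a lower bound, but it is worth flagging since the same picture does \emph{not} give a matching upper bound. One should also phrase the last inequality in terms of $\Sigma(P)$ rather than just $\Sigma_\lambda$, since a pot may carry bond-edge types it does not use when assembling $G$ --- but that discrepancy only strengthens the inequality.
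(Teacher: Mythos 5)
Your proposal is correct and follows essentially the same route as the paper: invoke Proposition~\ref{prop: allsinks} to assign each bond-edge type a source vertex, observe that the source vertices form a vertex cover, and conclude that the number of bond-edge types is at least the size of a minimum vertex cover. Your explicit handling of the non-injectivity of the bond-edge-type-to-source-vertex map and of unused bond-edge types in $P$ is slightly more careful than the paper's phrasing (which calls $v_1,\dots,v_b$ a ``vertex cover of size $b$'' even though the $v_i$ need not be distinct), but both only strengthen the inequality and do not change the argument.
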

\begin{proof}
Let $G$ be an unswappable graph, and let $P$ be such that $P$ realizes $G$ in Scenario 3 and $\#\Sigma(P) = B_3(G)$. By Proposition \ref{prop: allsinks}, for any bond-edge type $a \in \Sigma(P)$, all edges with cohesive-end type $a$ must be incident to a single vertex in the labeling of $G$ that results from an assembly design using $P$. 

Let $B_3(G) = b$ and denote the unique bond-edge types in $\Sigma(P)$ as $a_1, ..., a_b$. Each bond-edge type $a_i$ must correspond to a vertex, call it $v_i$, which is incident with all half-edges labeled with $a_i$ (i.e. $v_i$ is the source vertex for $a_i$). Note that all edges in $G$ are incident to one of $v_1, ..., v_b$, since if an edge is labeled with bond-edge type $a_i$, then it is incident to $v_i$. Thus, $v_1, ..., v_b$ is a vertex cover for $G$ of size $b = B_3(G)$, and $B_3(G)$ is greater than or equal to the size of the minimum vertex cover of $G$.
\end{proof}

Morally, this means that an unswappable graph cannot be formed via self-assembly without using at least $|K|$ bond-edge types, where $K$ is a minimum vertex cover of the graph. For many well-known graphs and families of graphs, the size of the minimum vertex cover is a known quantity, allowing us to achieve concrete bounds through Theorem \ref{b3lowerbound}. In general, however, finding the size of the minimum vertex cover of a graph is \textsf{NP}-hard. In fact, this problem is still \textsf{NP}-hard even if the graph is known to be $k$-regular \cite{regularvc}.

Given an unswappable graph, it is natural to ask the question of whether there exists a pot that achieves this lower bound for bond-edge types. We can reverse-engineer a pot from a vertex cover by assigning vertices in the vertex cover as sources for distinct bond-edge types.

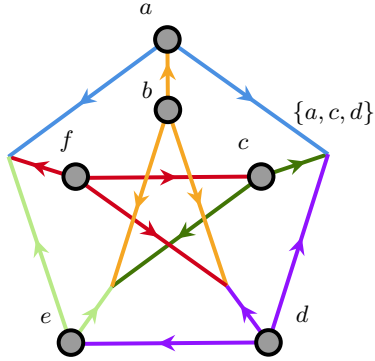
\begin{figure}[H]
               \centering
        \tikzset{every picture/.style={line width=0.75pt}} 

\begin{tikzpicture}[x=0.75pt,y=0.75pt,yscale=-1,xscale=1]

\draw [color={rgb, 255:red, 245; green, 166; blue, 35 }  ,draw opacity=1 ][line width=1.5]    (130.4,39.46) -- (130.67,75.03) ;
\draw [shift={(130.49,51.35)}, rotate = 89.56] [fill={rgb, 255:red, 245; green, 166; blue, 35 }  ,fill opacity=1 ][line width=0.08]  [draw opacity=0] (8.75,-4.2) -- (0,0) -- (8.75,4.2) -- (5.81,0) -- cycle    ;
\draw [color={rgb, 255:red, 65; green, 117; blue, 5 }  ,draw opacity=1 ][line width=1.5]    (211.44,97.4) -- (177.7,108.65) ;
\draw [shift={(200.17,101.16)}, rotate = 161.56] [fill={rgb, 255:red, 65; green, 117; blue, 5 }  ,fill opacity=1 ][line width=0.08]  [draw opacity=0] (8.75,-4.2) -- (0,0) -- (8.75,4.2) -- (5.81,0) -- cycle    ;
\draw [color={rgb, 255:red, 184; green, 233; blue, 134 }  ,draw opacity=1 ][line width=1.5]    (102.45,164.22) -- (81.76,193.15) ;
\draw [shift={(95.53,173.88)}, rotate = 125.56] [fill={rgb, 255:red, 184; green, 233; blue, 134 }  ,fill opacity=1 ][line width=0.08]  [draw opacity=0] (8.75,-4.2) -- (0,0) -- (8.75,4.2) -- (5.81,0) -- cycle    ;
\draw [color={rgb, 255:red, 208; green, 2; blue, 27 }  ,draw opacity=1 ][line width=1.5]    (50.25,98.64) -- (84.16,109.37) ;
\draw [shift={(61.58,102.23)}, rotate = 17.56] [fill={rgb, 255:red, 208; green, 2; blue, 27 }  ,fill opacity=1 ][line width=0.08]  [draw opacity=0] (8.75,-4.2) -- (0,0) -- (8.75,4.2) -- (5.81,0) -- cycle    ;
\draw [color={rgb, 255:red, 144; green, 19; blue, 254 }  ,draw opacity=1 ][line width=1.5]    (160.26,163.77) -- (181.38,192.39) ;
\draw [shift={(167.32,173.33)}, rotate = 53.56] [fill={rgb, 255:red, 144; green, 19; blue, 254 }  ,fill opacity=1 ][line width=0.08]  [draw opacity=0] (8.75,-4.2) -- (0,0) -- (8.75,4.2) -- (5.81,0) -- cycle    ;
\draw [color={rgb, 255:red, 74; green, 144; blue, 226 }  ,draw opacity=1 ][line width=1.5]    (50.25,98.64) -- (130.4,39.46) ;
\draw [shift={(85.58,72.56)}, rotate = 323.56] [fill={rgb, 255:red, 74; green, 144; blue, 226 }  ,fill opacity=1 ][line width=0.08]  [draw opacity=0] (8.75,-4.2) -- (0,0) -- (8.75,4.2) -- (5.81,0) -- cycle    ;
\draw [color={rgb, 255:red, 65; green, 117; blue, 5 }  ,draw opacity=1 ][line width=1.5]    (102.45,164.22) -- (177.7,108.65) ;
\draw [shift={(135.33,139.94)}, rotate = 323.56] [fill={rgb, 255:red, 65; green, 117; blue, 5 }  ,fill opacity=1 ][line width=0.08]  [draw opacity=0] (8.75,-4.2) -- (0,0) -- (8.75,4.2) -- (5.81,0) -- cycle    ;
\draw [color={rgb, 255:red, 208; green, 2; blue, 27 }  ,draw opacity=1 ][line width=1.5]    (84.16,109.37) -- (177.7,108.65) ;
\draw [shift={(135.33,108.98)}, rotate = 179.56] [fill={rgb, 255:red, 208; green, 2; blue, 27 }  ,fill opacity=1 ][line width=0.08]  [draw opacity=0] (8.75,-4.2) -- (0,0) -- (8.75,4.2) -- (5.81,0) -- cycle    ;
\draw [color={rgb, 255:red, 245; green, 166; blue, 35 }  ,draw opacity=1 ][line width=1.5]    (130.67,75.03) -- (160.26,163.77) ;
\draw [shift={(146.85,123.58)}, rotate = 251.56] [fill={rgb, 255:red, 245; green, 166; blue, 35 }  ,fill opacity=1 ][line width=0.08]  [draw opacity=0] (8.75,-4.2) -- (0,0) -- (8.75,4.2) -- (5.81,0) -- cycle    ;
\draw [color={rgb, 255:red, 144; green, 19; blue, 254 }  ,draw opacity=1 ][line width=1.5]    (81.76,193.15) -- (181.38,192.39) ;
\draw [shift={(125.67,192.81)}, rotate = 359.56] [fill={rgb, 255:red, 144; green, 19; blue, 254 }  ,fill opacity=1 ][line width=0.08]  [draw opacity=0] (8.75,-4.2) -- (0,0) -- (8.75,4.2) -- (5.81,0) -- cycle    ;
\draw [color={rgb, 255:red, 74; green, 144; blue, 226 }  ,draw opacity=1 ][line width=1.5]    (130.4,39.46) -- (211.44,97.4) ;
\draw [shift={(174.5,70.99)}, rotate = 215.56] [fill={rgb, 255:red, 74; green, 144; blue, 226 }  ,fill opacity=1 ][line width=0.08]  [draw opacity=0] (8.75,-4.2) -- (0,0) -- (8.75,4.2) -- (5.81,0) -- cycle    ;
\draw [color={rgb, 255:red, 144; green, 19; blue, 254 }  ,draw opacity=1 ][line width=1.5]    (211.44,97.4) -- (181.38,192.39) ;
\draw [shift={(198.19,139.27)}, rotate = 107.56] [fill={rgb, 255:red, 144; green, 19; blue, 254 }  ,fill opacity=1 ][line width=0.08]  [draw opacity=0] (8.75,-4.2) -- (0,0) -- (8.75,4.2) -- (5.81,0) -- cycle    ;
\draw [color={rgb, 255:red, 184; green, 233; blue, 134 }  ,draw opacity=1 ][line width=1.5]    (50.25,98.64) -- (81.76,193.15) ;
\draw [shift={(64.14,140.3)}, rotate = 71.56] [fill={rgb, 255:red, 184; green, 233; blue, 134 }  ,fill opacity=1 ][line width=0.08]  [draw opacity=0] (8.75,-4.2) -- (0,0) -- (8.75,4.2) -- (5.81,0) -- cycle    ;
\draw [color={rgb, 255:red, 208; green, 2; blue, 27 }  ,draw opacity=1 ][line width=1.5]    (84.16,109.37) -- (160.26,163.77) ;
\draw [shift={(125.79,139.13)}, rotate = 215.56] [fill={rgb, 255:red, 208; green, 2; blue, 27 }  ,fill opacity=1 ][line width=0.08]  [draw opacity=0] (8.75,-4.2) -- (0,0) -- (8.75,4.2) -- (5.81,0) -- cycle    ;
\draw [color={rgb, 255:red, 245; green, 166; blue, 35 }  ,draw opacity=1 ][line width=1.5]    (102.45,164.22) -- (130.67,75.03) ;
\draw [shift={(114.78,125.25)}, rotate = 287.56] [fill={rgb, 255:red, 245; green, 166; blue, 35 }  ,fill opacity=1 ][line width=0.08]  [draw opacity=0] (8.75,-4.2) -- (0,0) -- (8.75,4.2) -- (5.81,0) -- cycle    ;
\draw  [fill={rgb, 255:red, 155; green, 155; blue, 155 }  ,fill opacity=1 ][line width=1.5]  (77.97,108.7) .. controls (77.97,105.28) and (80.74,102.51) .. (84.16,102.51) .. controls (87.58,102.51) and (90.35,105.28) .. (90.35,108.7) .. controls (90.35,112.12) and (87.58,114.9) .. (84.16,114.9) .. controls (80.74,114.9) and (77.97,112.12) .. (77.97,108.7) -- cycle ;
\draw  [fill={rgb, 255:red, 155; green, 155; blue, 155 }  ,fill opacity=1 ][line width=1.5]  (171.51,108.65) .. controls (171.51,105.23) and (174.28,102.46) .. (177.7,102.46) .. controls (181.12,102.46) and (183.89,105.23) .. (183.89,108.65) .. controls (183.89,112.07) and (181.12,114.85) .. (177.7,114.85) .. controls (174.28,114.85) and (171.51,112.07) .. (171.51,108.65) -- cycle ;
\draw  [fill={rgb, 255:red, 155; green, 155; blue, 155 }  ,fill opacity=1 ][line width=1.5]  (75.57,192.48) .. controls (75.57,189.06) and (78.34,186.29) .. (81.76,186.29) .. controls (85.18,186.29) and (87.95,189.06) .. (87.95,192.48) .. controls (87.95,195.9) and (85.18,198.68) .. (81.76,198.68) .. controls (78.34,198.68) and (75.57,195.9) .. (75.57,192.48) -- cycle ;
\draw  [fill={rgb, 255:red, 155; green, 155; blue, 155 }  ,fill opacity=1 ][line width=1.5]  (175.19,192.39) .. controls (175.19,188.97) and (177.96,186.19) .. (181.38,186.19) .. controls (184.8,186.19) and (187.58,188.97) .. (187.58,192.39) .. controls (187.58,195.81) and (184.8,198.58) .. (181.38,198.58) .. controls (177.96,198.58) and (175.19,195.81) .. (175.19,192.39) -- cycle ;
\draw  [fill={rgb, 255:red, 155; green, 155; blue, 155 }  ,fill opacity=1 ][line width=1.5]  (124.48,75.03) .. controls (124.48,71.61) and (127.25,68.84) .. (130.67,68.84) .. controls (134.09,68.84) and (136.86,71.61) .. (136.86,75.03) .. controls (136.86,78.45) and (134.09,81.22) .. (130.67,81.22) .. controls (127.25,81.22) and (124.48,78.45) .. (124.48,75.03) -- cycle ;
\draw  [fill={rgb, 255:red, 155; green, 155; blue, 155 }  ,fill opacity=1 ][line width=1.5]  (124.2,39.46) .. controls (124.2,36.04) and (126.98,33.27) .. (130.4,33.27) .. controls (133.82,33.27) and (136.59,36.04) .. (136.59,39.46) .. controls (136.59,42.88) and (133.82,45.66) .. (130.4,45.66) .. controls (126.98,45.66) and (124.2,42.88) .. (124.2,39.46) -- cycle ;

\draw (114.8,19.74) node [anchor=north west][inner sep=0.75pt]  [font=\small] [align=left] {$\displaystyle a$};
\draw (116,59.34) node [anchor=north west][inner sep=0.75pt]  [font=\small] [align=left] {$\displaystyle b$};
\draw (164.4,88.14) node [anchor=north west][inner sep=0.75pt]  [font=\small] [align=left] {$\displaystyle c$};
\draw (193.6,172.14) node [anchor=north west][inner sep=0.75pt]  [font=\small] [align=left] {$\displaystyle d$};
\draw (64.8,176.14) node [anchor=north west][inner sep=0.75pt]  [font=\small] [align=left] {$\displaystyle e$};
\draw (74.4,84.14) node [anchor=north west][inner sep=0.75pt]  [font=\small] [align=left] {$\displaystyle f$};
\draw (192,69.34) node [anchor=north west][inner sep=0.75pt]  [font=\small]  {$\{a,c,d\}$};

\end{tikzpicture}
        \caption{An assembly design of the Petersen graph derived from the vertex cover in Figure \ref{petersen_vc}.} \label{petersen_pot}
    \end{figure}

\begin{example}
    Figure \ref{petersen_pot} provides an example of an assembly design and corresponding labeling reverse-engineered from the vertex cover given in Figure \ref{petersen_vc}, using the vertex cover as source vertices for distinct bond-edge types. Colors represent distinct bond-edge types, and edge orientation indicates which half-edges are $a$ and $\hat{a}$ (with arrows pointing away from unhatted cohesive-end types). The pot, written out explicitly, is as follows: 
       $$ P= \{\{a^2, \hat{b}\}, \{b^3\}, \{c^2, \hat{f}\}, \{d^3\}, \{e^2, \hat{d}\}, \{f^3\}, \{\hat{a}, \hat{e}, \hat{f}\}, \{\hat{a}, \hat{c}, \hat{d}\}, \{\hat{b}, \hat{c}, \hat{e}\}, \{\hat{b}, \hat{d}, \hat{f}\}\}$$
       
\end{example}

If $K$ is a vertex cover for $G$ and $\lambda$ is an assembly design derived from $K$, then note that every vertex $v\not\in K$ is only adjacent to vertices in $K$. Thus, any vertex $v\not \in K$ must be labeled such that the incident half-edges are of cohesive-end types determined by the neighboring source vertices; this determines the tile type $t$ such that $\lambda(v)=t$. Recall that a neighbor set can be defined as follows:

\begin{definition}
Given a graph $G$ and vertex $v\in G$, let $N(v)$ denote the \textbf{neighbor set} of $v$, i.e. the set of all vertices connected to $v$ via an edge. 
\end{definition}

Therefore, we find that for $v\not \in K$, the neighbor set $N(v)$ determines the tile type used to label $v$.

\begin{proposition}\label{t3lowerbound}
If $G$ is an unswappable graph with $X$ distinct neighbor sets among all vertices (i.e. $X = |\{N(v): v\in V(G)\}|$), then $T_3(G) \geq X$.
\end{proposition}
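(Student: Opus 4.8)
The plan is as follows. Let $P$ be any pot realizing $G$ in Scenario 3, fix an assembly design $\lambda$ for $G$, and normalize $\lambda$ as permitted by Proposition \ref{prop: allsinks} (and the discussion following it) so that every bond-edge type $a$ used in $\lambda$ has a source vertex carrying all of its unhatted half-edges. Write $T := \{\lambda(v) : v \in V(G)\}$ for the set of tiles actually used; since $T \subseteq P$, it suffices to prove $|T| \geq X$, for this yields $\#P \geq X$ for every valid pot and hence $T_3(G) \geq X$. I would establish $|T| \geq X$ by exhibiting an injection from the collection $\mathcal{N} := \{N(v) : v \in V(G)\}$ of distinct neighbor sets into $T$.

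First I would record two facts about $\lambda$. (i) As noted after Proposition \ref{prop: allsinks}, the tile of any source vertex is used exactly once in $\lambda$; in particular, if $u$ is a source then $\lambda(u) \neq \lambda(v)$ for every $v \neq u$. (ii) If $v$ is \emph{not} a source, then for each incident edge $\{v,w\}$ the bond-edge type of that edge is sourced at $w$ rather than at $v$, so $v$ carries only the hatted half-edge on it and $w$ is exactly the source of that bond-edge type; moreover two distinct edges at $v$ carry distinct bond-edge types, since otherwise the single source of that common type, being an endpoint of both edges and distinct from $v$, would be the other endpoint of both, forcing the two edges to coincide. Consequently $\lambda(v)$ is precisely the set $\{\hat{c} : c \text{ is a bond-edge type incident to } v\}$, from which one recovers the set of bond-edge types incident to $v$ and then $N(v)$ as the set of sources of those types. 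Hence, for non-source vertices $v, v'$, the equality $\lambda(v) = \lambda(v')$ forces $N(v) = N(v')$.

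Finally I would assemble the injection: for each $Q \in \mathcal{N}$ choose a vertex $v_Q$ with $N(v_Q) = Q$ and set $\phi(Q) := \lambda(v_Q) \in T$. Suppose $\phi(Q_1) = \phi(Q_2)$ with $Q_1 \neq Q_2$; then $v_{Q_1} \neq v_{Q_2}$, so by fact (i) neither $v_{Q_1}$ nor $v_{Q_2}$ is a source, and then by fact (ii) $N(v_{Q_1}) = N(v_{Q_2})$, i.e. $Q_1 = Q_2$, a contradiction. Thus $\phi$ is injective and $|T| \geq |\mathcal{N}| = X$, which completes the argument.

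The main point requiring care is not a computation but the case split itself: the tidy slogan ``distinct neighbor sets force distinct tiles'' is only transparent for non-source vertices, and for source vertices it is fact (i) — a source's tile occurs just once — that does the separating work, so the interaction between the two kinds of vertices must be handled exactly as above. I would also sanity-check the degenerate situations (isolated vertices, or bond-edge types that appear in $P$ but are unused by a particular $\lambda$) to confirm they respect the source / non-source dichotomy invoked here.
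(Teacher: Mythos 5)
Your proof is correct and follows essentially the same route as the paper's: both rest on Proposition \ref{prop: allsinks} giving each bond-edge type a unique source vertex whose tile occurs only once, plus the observation that a non-source vertex's tile consists exactly of the hatted types of its neighbors' bond-edge types, so equal tiles force equal neighbor sets. Your packaging as an explicit injection from neighbor sets to tiles (and your explicit check that a source's tile cannot coincide with a non-source's) is slightly more careful than the paper's counting phrasing, but it is the same argument.
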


\begin{proof}
Suppose $\lambda$ is an assembly design for $G$ in Scenario 3. We know that $\Sigma(P_\lambda(G))$ must correspond to a vertex cover of $G$; in particular, for every bond-edge type, there is a designated source vertex. Then for any two source vertices $v,w$ we have $\lambda(v) \neq \lambda(w)$, since these are the unique vertices with half-edges labeled with the unhatted cohesive-end type of their designated bond-edge type.

Furthermore, since the set of source vertices is a vertex cover, we can deduce the tile type of a non-source vertex by its neighbor set (all neighbors must be in the vertex cover). In particular, if a vertex $v$ has the bond-edge type $a$ source vertex as a neighbor, then that $\hat{a} \in \lambda(v)$. Therefore, non-source vertices can only have the same tile type if they have the same neighbor set. Thus, $\#P_\lambda(G)$ is equal to the sum of the number of source vertices and the number of distinct neighbor sets among the non-source vertices. This is bounded below by the number of distinct neighbor sets among all vertices in $G$. In particular, the number of source vertices is bounded below by the number of distinct neighbor sets among the source vertices, so the sum is bounded below by the number of distinct neighbor sets among all vertices.
\end{proof}

In the case where no two vertices have the same neighbor set, we obtain the following result:

\begin{corollary}
\label{tileUpperBoundCor}
If $G$ is unswappable and for every two distinct vertices $v, w \in G$ we have that $N(v) \neq N(w)$, then $T_3(G)=\#V(G)$.
\end{corollary}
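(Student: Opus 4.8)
The plan is to combine the lower bound from Proposition \ref{t3lowerbound} with a matching upper bound coming from the reverse-engineered pot described right before the corollary. First I would establish the lower bound: since $G$ is unswappable and, by hypothesis, all $\#V(G)$ neighbor sets $N(v)$ are pairwise distinct, Proposition \ref{t3lowerbound} immediately gives $T_3(G) \geq X = \#V(G)$. This direction requires no new work beyond invoking the proposition with $X = \#V(G)$.

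For the reverse inequality $T_3(G) \leq \#V(G)$, I would exhibit an explicit pot with at most $\#V(G)$ tile types that realizes $G$ in Scenario 3. The natural candidate is the vertex-cover construction: take any vertex cover $K$ of $G$ (e.g. a minimum one), assign to each vertex in $K$ a distinct bond-edge type with that vertex as its source (orienting all incident half-edges of that type outward from the source), and then for each vertex $v \notin K$ give it the tile type forced by its neighbor set $N(v) \subseteq K$, namely the multiset of hatted cohesive-end types of the bond-edge types sourced at the neighbors of $v$. This uses exactly $|K|$ source-tile types plus one tile type per distinct neighbor set among non-source vertices, so the total number of tile types is at most $\#V(G)$ (indeed, it equals $|K|$ plus the number of non-source vertices when all neighbor sets are distinct, which is exactly $\#V(G)$). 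I would then argue this pot actually realizes $G$ — by construction $\lambda(v)$ is a valid tile for each $v$ and every edge gets a complementary pair of half-edges — and, crucially, that it is valid in Scenario 3. For the Scenario 3 check, I would observe that in any realization by this pot, each source tile type can be used only once (it carries all unhatted copies of its bond-edge type), so the source vertices of any realized graph form a vertex cover isomorphic-position to $K$, and the non-source vertices' tiles then force their adjacencies to be exactly into $K$ with the prescribed bond-edge types; one then checks that the only graph of order $\leq \#V(G)$ assembled this way is $G$ itself, up to isomorphism.

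The main obstacle I anticipate is the Scenario 3 verification for the constructed pot: it is not automatic that reverse-engineering a pot from a vertex cover yields a pot whose entire output (on vertex sets of size $\leq \#V(G)$) consists only of copies of $G$. In principle the non-source tiles could recombine with the source tiles in a way that produces a smaller or nonisomorphic graph of the same order. I would handle this by using the rigidity forced by the source structure: since each bond-edge type has a unique source tile usable once, and each non-source tile type's half-edges are entirely hatted and determined by a specific neighbor set, any assembled graph of order $\leq \#V(G)$ must place the source tiles at distinct vertices and connect each remaining vertex precisely to the neighbor set encoded by its tile — reconstructing $G$ exactly. It is plausible the paper instead defers the full Scenario 3 argument to the general upper-bound machinery of Section \ref{sec: upper}; if so, the corollary's proof can simply cite that construction for the $\leq$ direction and Proposition \ref{t3lowerbound} for the $\geq$ direction, with the distinctness hypothesis ensuring the two bounds coincide at $\#V(G)$.
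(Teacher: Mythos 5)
The lower-bound half of your argument is exactly right and is all the paper intends: the corollary is stated without proof as an immediate consequence of Proposition \ref{t3lowerbound}, which with all neighbor sets distinct gives $T_3(G)\geq \#V(G)$, combined with the generic upper bound $T_3(G)\leq \#V(G)$.

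The gap is in your upper-bound direction. You propose to realize $G$ with a pot derived from a vertex cover $K$ and then assert that the ``rigidity forced by the source structure'' guarantees Scenario 3 validity. This is precisely what can fail: the paper's own example with $Q_3$ shows that a pot derived from a (minimum) vertex cover can realize a smaller nonisomorphic graph ($K_{3,3}$ on 6 vertices from the 8-vertex cube), even though each source tile carries all unhatted copies of its bond-edge type. The point of Theorem \ref{upper bound theorem} is that one needs the additional hypotheses of neighborhood independence, 2-edge-connectivity of the induced subgraph on $K$, and $k$-regularity to rule this out; your claim that any assembled graph of order $\leq \#V(G)$ must reconstruct $G$ does not follow from the source structure alone. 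Fortunately the corollary does not need any of this machinery: the inequality $T_3(G)\leq \#V(G)$ holds for every graph via the trivial pot that assigns a distinct tile type to every vertex and a distinct bond-edge type to every edge. In that pot each cohesive-end type occurs on exactly one half-edge of one tile type, so every tile type must be used with equal multiplicity (one, for a graph of order at most $\#V(G)$) and every edge's endpoints are forced, whence the only realizable graph of order at most $\#V(G)$ is $G$ itself. Citing that trivial construction for the $\leq$ direction and Proposition \ref{t3lowerbound} for the $\geq$ direction completes the proof without invoking the vertex-cover construction at all.
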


For example, the Petersen graph shown in Figure \ref{petersen_pot} requires a unique tile type for each vertex. It is worth noting that the property of unswappability is not very common among graphs in general. However, many graphs and graph families that are $k$-regular have some underlying structure that is broken when any edge swap is made -- this often arises in cases of combinatorial graphs. Section \ref{section4} gives examples of specific graph families for which these lower bounds given in Theorem \ref{b3lowerbound} and Proposition \ref{t3lowerbound} apply. Often in the case of combinatorial graphs and other unswappable graphs, they also have the property that no two vertices have the same neighbor set, giving exact values for $T_3$ as well.

One hindrance to the methods presented here is that it is difficult to check whether a given graph is unswappable, since there is no known polynomial time algorithm solving the graph isomorphism problem \cite{Babai2019GROUPGA}. The difficulty of checking whether a graph remains isomorphic after an edge swap has been demonstrated within the flexible tile model for specific graph families \cite{almodovar2019triangular} \cite{griffin2023tile}.  The authors provide a shorthand method of checking unswappability in Appendix \ref{appendixA}. Note that the method in Appendix \ref{appendixA} is not guaranteed to be correct, since we use the number of spanning trees as a proxy to determine whether graphs are isomorphic (and thus it cannot distinguish nonisomorphic graphs that have the same number of spanning trees). In practice, however, this method is a very effective sanity check for small cases.

\section{Upper Bounds} \label{sec: upper}

We would now like to construct upper bounds on graphs meeting certain conditions or, said another way, establish methods for Scenario 3 pot construction for these graphs. As we saw when establishing the lower bounds, the ``swappability" problem for a graph $G$ can be rather difficult to solve. To use the same bond-edge type to label non-incident edges requires checking isomorphism between two graphs, which is both computationally difficult and hard to generalize for large families. However, if we opt to only repeat bond-edge types on edges incident to each other, this challenge is removed.

Suppose we have a pot $P$ that realizes a target graph $G$. If we assume that all edges of a certain bond-edge type are incident to each other, then (as in the previous section) each bond-edge type will have a unique source vertex, and the set of all source vertices $K$ will form a vertex cover for $G$. This motivates the following question:

\begin{question}
    Given a graph $G$, what vertex covers $K$ can be used as source vertices in a valid Scenario 3 pot?
\end{question}

The following example shows that there are vertex covers that cannot be used as source vertices in a valid Scenario 3 pot.

\begin{example}
    Consider the cube graph $Q_3$, represented with the vertices $V(Q_3) = \{0, 1\}^3$ and edges between vertices differing at one coordinate. Then the set $K = \{(0, 0, 0), (0, 1, 1), (1, 0, 1), (1, 1, 0)\}$ is a vertex cover (in fact, a minimum one). We may assign the vertices in $K$ to be source vertices for the bond-edge types $a, b, c,$ and $d$; this assembly design $\lambda$ defines the assembling pot $$P_\lambda(Q_3) = \{\{a^3\}, \{b^3\}, \{c^3\}, \{d^3\}, \{\hat a, \hat b, \hat c\}, \{\hat a, \hat b, \hat d\}, \{\hat a, \hat c, \hat d\}, \{\hat b,\hat c, \hat d\}\}.$$ This pot is not valid in Scenario 3, however, as the subset of tiles $\{\{a^3\}, \{b^3\}, \{c^3\}, \{\hat a, \hat b, \hat c\}\}$ realizes $K_{3, 3}$. This is shown in Figure \ref{Q3ConstructionAndExample}.

    \begin{figure} 
    \centering
    \import{./}{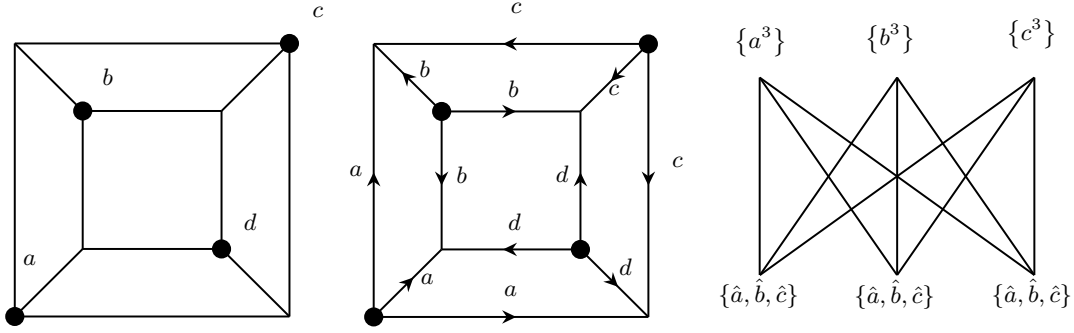}
    \caption{Vertex cover of $Q_3$ (left); labeling of $Q_3$ produced from vertex cover (center); $K_{3,3}$ graph realized by resulting pot (right).}
    \label{Q3ConstructionAndExample}
\end{figure}
\end{example}

It is important to note that, given a vertex cover $K \subseteq V(G)$, there is typically more than one pot $P$ using $K$ as source vertices. While the bond-edges between $K$ and $V(G) - K$ are determined, one must choose a bond-edge type for each edge between vertices in $K$. If such an edge is $e = \{v, w\}$, then $e$ must use either the bond-edge type associated with $v$ or the bond-edge type associated with $w$. This can be thought of as designating a source vertex for $e$. We now borrow the following two definitions from \cite{westGT}:

\begin{definition}
    Given a graph $G$ and a vertex subset $H \subseteq V(G)$, the \textbf{induced subgraph} on $H$ is the graph obtained by taking vertex set $H$ and all edges in $G$ that are between vertices of $H$. 
\end{definition}

Note that the induced subgraph and the set $H$ are often referred to interchangeably. 

\begin{definition}
    Given a graph $G$, an \textbf{orientation} $O$ on $G$ is a choice of a source and sink for every edge in $G$. 
\end{definition}
With these definitions in mind, it is clear that the additional information required to construct a pot $P$ from a vertex cover $K$ is an orientation $O$ on the induced subgraph $K$. The source vertex for an edge determines the bond-edge type used to label that edge. For clarity, we may define the following:

\begin{definition}
Given a tuple $(K, O)$ of a vertex cover of a graph $G$ and an orientation on its induced subgraph, we may construct a pot $P$ by considering each vertex in $K$ as a source vertex for unique bond-edge types and orienting edges between source vertices via the orientation $O$. In particular, the pot $P$ consists of the set of tile types corresponding to the labeled vertices of $G$, once we assign bond-edge types and orientations for each edge. This pot $P$ is said to be \textbf{derived} from the tuple $(K, O)$.
\end{definition}

Note that this ``derivation'' of a pot is equivalent to the ``reverse-engineering'' discussed in Section 2, with the additional information of the orientation $O$. The following example illustrates this definition:

\begin{example}
    Consider the Petersen Graph $Kn(5, 2)$ defined on the 2-element subsets of $\{1, \cdots, 5\}$. Take the vertex cover $K = \{\{1, 2\}, \{3, 4\}, \{1, 3\}, \{2, 4\}, \{1, 4\}, \{2, 3\}\},$ and the orientation $O$ defined as $\{1, 2\} \to \{3, 4\}, \{1, 3\} \to \{2, 4\}, \{1, 4\} \to \{2, 3\}$. We can derive the pot $P$ from $(K, O)$ as follows. First, associate to each element of the vertex cover a bond-edge type. We assign 
    $$\{1, 2\} \leftrightarrow a, \qquad \{3, 4\} \leftrightarrow b, \qquad \{1, 3\} \leftrightarrow c, \qquad 
    \{2, 4\} \leftrightarrow d, \qquad \{1, 4\} \leftrightarrow e, \qquad \{2, 3\} \leftrightarrow f.$$
    Now, for every edge not between vertices of $K$, we use the bond-edge type associated with the vertex that is in $K$. For edges within $K$, we use the bond-edge type of the vertex that is the source of $e$ in the orientation $O$. Following this assembly design, call it $\lambda$, gives the following assembling pot $P_\lambda(Kn(5,2))$:
    $$
    \lambda(\{1,2\}) = \{a^3\}, \quad \lambda(\{3,4\})=\{b^2, \hat a\}, \quad \lambda(\{1,3\}) = \{c^3\}, \quad \lambda(\{2,4\}) = \{d^2, \hat c\},
    \quad \lambda(\{1,4\}) = \{e^3\},$$ $$
    \lambda(\{2,3\})=\{f^2, \hat e\}, \quad \lambda(\{1,5\}) = \{\hat b, \hat d, \hat f\}, \quad  \lambda(\{2,5\}) = \{\hat b, \hat c, \hat e\}, \quad
    \lambda(\{3,5\}) = \{\hat a, \hat d, \hat e\}, \quad 
    \lambda(\{4,5\}) = \{\hat a, \hat c, \hat f\}.
    $$
    This construction process is illustrated in Figure \ref{PetersenVtexCoverConst}.
\end{example}

\begin{figure} 
    \centering
    \import{./}{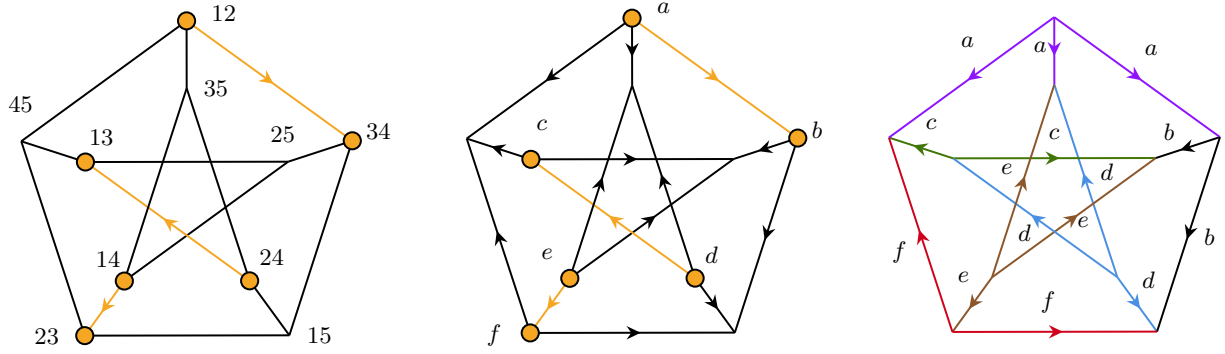}
    \caption{Vertex cover of Petersen graph with chosen edge orientations (in orange) (left); complete chosen orientation of the Petersen graph (center); resulting labeling of the Petersen graph (right). In the final construction, bond edges $a, b, c, d, e,$ and $f$ are color coded purple, black, green, blue, brown, and red, respectively. The label for each edge is also indicated by the letter closest to the edge's central arrow.}
    \label{PetersenVtexCoverConst}
\end{figure}

Given a graph $G$, we wish to know which tuples $(K, O)$ will allow us to derive a valid Scenario 3 pot for $G$. In particular, if $P$ is the pot derived from $(K, O)$, we wish to determine whether $P$ generates any nonisomorphic graphs of equal or lesser order. We have the following lemma:

\begin{lemma}
\label{directed edge lemma}
    Let $P_\lambda(G)$ be an assembling pot derived from an oriented vertex cover $(K, O)$ on a graph $G$. Suppose there is an edge $e = (v_1 , v_2)$ in $O$ such that $\lambda(v_1)=t_1$ and $\lambda(v_2)=t_2$. Then for any graph $G' \in \mathcal{O}(P_\lambda(G))$, if $\lambda'$ is an assembly design for $G'$ and $\lambda'(v) = t_2$ for some $v \in V(G')$, then there exists $w \in V(G')$ such that $\lambda'(w)=t_1$.
\end{lemma}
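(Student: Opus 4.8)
The plan is to trace the tile type $t_2$ back to its characteristic feature in the pot $P_\lambda(G)$. The key observation is that, because the pot is derived from an oriented vertex cover $(K,O)$, the vertex $v_2$ is a source vertex for some unique bond-edge type — call it $b$ — and the edge $e = (v_1,v_2)$ with $v_1 \to v_2$ in $O$ means that $v_1$'s tile contains the complementary cohesive-end $\hat b$. So $t_1 = \lambda(v_1)$ contains at least one $\hat b$ among its half-edges. The point is that $v_2$ is the \emph{only} vertex in $G$ whose tile contains the unhatted symbol $b$ (this is exactly what it means to be the source of $b$), so $t_2$ is the unique tile type in $P_\lambda(G)$ in which $b$ (unhatted) appears. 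First I would make this precise: identify $b$, note $b \in t_2$ and $\hat b \in t_1$, and record that $t_2$ is the unique tile in the pot containing a half-edge labelled $b$.

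Next I would do the counting argument on $G'$. Suppose $\lambda'$ is an assembly design realizing $G' \in \mathcal{O}(P_\lambda(G))$, and suppose some vertex $v \in V(G')$ has $\lambda'(v) = t_2$. Since $t_2$ contains a half-edge of cohesive-end type $b$, the vertex $v$ has a half-edge labelled $b$ in $G'$. In any assembly design, every half-edge must be matched with a complementary half-edge to form a bond-edge; so the $b$ half-edge at $v$ must be joined to a half-edge of type $\hat b$ belonging to some vertex $w \in V(G')$. I need to argue that $w$ must carry tile type $t_1$. The relevant fact is that, in the pot $P_\lambda(G)$, the only tile types containing a $\hat b$ half-edge are exactly those corresponding to vertices of $G$ that are neighbors of $v_2$ and "point toward" $v_2$: namely vertices $u \notin K$ adjacent to $v_2$, and vertices $u \in K$ with $u \to v_2$ in $O$. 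The subtlety is that there could be several such tile types, not just $t_1$, so the statement as literally phrased — "there exists $w$ with $\lambda'(w) = t_1$" — cannot follow merely from the existence of \emph{a} $\hat b$-bearing neighbor of $v$.

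So the main obstacle, and the step I would scrutinize most carefully, is reconciling the lemma's conclusion with the possibility of multiple $\hat b$-bearing tiles. I expect the resolution is one of the following: either (a) the intended reading is that $t_1$ is the \emph{unique} tile type containing $\hat b$ (which would hold, e.g., if $v_2$ has only one neighbor in the construction that points toward it — plausibly the lemma is applied in contexts where this is arranged, or the construction only ever produces one such tile), in which case the matched vertex $w$ is forced to have $\lambda'(w)=t_1$ and we are done; or (b) one should read "$t_1$" loosely as "a tile type containing $\hat b$", and the honest statement is that some such vertex $w$ exists. I would first check which of these the authors intend by looking at how the lemma is used downstream; absent that, I would prove the robust version: \emph{there exists $w \in V(G')$ such that $\hat b \in \lambda'(w)$, where $\hat b$ is the complement of the unique cohesive-end type sourced at $v_2$}, and note that if $\hat b$ appears in only one tile type of the pot — which is the generic situation for these vertex-cover-derived pots when $v_2$ has a single in-neighbor under $O$ — then that tile type is $t_1$, giving the stated conclusion. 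The core mechanism in every case is the same: $b$ lives on exactly one tile ($t_2$), every $b$ half-edge needs a $\hat b$ partner, and $\hat b$ half-edges live only on tiles that were built as $\hat b$-receiving neighbors of $v_2$ in the original construction.
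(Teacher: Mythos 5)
There is a genuine gap here, and it comes from tracing the wrong cohesive-end type. You take $b$ to be the bond-edge type sourced at $v_2$ and then try to follow the unhatted $b$ on $t_2$ forward to a $\hat b$-partner; as you yourself observe, many tile types in a vertex-cover-derived pot can carry $\hat b$, so this direction cannot force the partner to be $t_1$, and neither of your proposed repairs (declaring $t_1$ the unique $\hat b$-bearing tile, or weakening the conclusion) is what is actually needed. You have also inverted the derivation convention: in the paper, the arrow $v_1 \to v_2$ in $O$ means the edge $e$ is labeled with the bond-edge type $b_1$ sourced at the \emph{tail} $v_1$ (arrows point away from unhatted cohesive-ends), so the edge contributes an unhatted $b_1$ to $t_1$ and a hatted $\hat b_1$ to $t_2$ --- not a $\hat b_2$ to $t_1$.

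Once the correct symbol is identified, the argument closes immediately and the uniqueness worry evaporates. Since $\hat b_1 \in t_2$, any $v \in V(G')$ with $\lambda'(v) = t_2$ carries a $\hat b_1$ half-edge, which in a complete assembly must bond to an unhatted $b_1$ half-edge at some $w \in V(G')$. But $v_1$ is the source of $b_1$, so $t_1$ is the \emph{only} tile type in $P_\lambda(G)$ containing unhatted $b_1$ cohesive-ends; since every tile used in $G'$ comes from $P_\lambda(G)$, we must have $\lambda'(w) = t_1$. The uniqueness that drives the lemma lives on the unhatted (source) side of the bond-edge type associated with the tail of the directed edge, not on the hatted side of the type associated with its head; this is exactly why the implication in the lemma runs in the same direction as the arrow $v_1 \to v_2$. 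The lemma as stated is correct, and your ``robust version'' is unnecessary.
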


\begin{proof}
    Let $v_1,v_2 \in V(G)$ be sources for the bond-edges types $b_1$ and $b_2$, respectively. Since edge $e$ is directed $v_1 \to v_2$ in $O$, we have that $\lambda((v_1,v_2))=b_1$. Thus, $\hat b_1 \in t_2$. Suppose $G' \in \mathcal{O}(P_\lambda(G))$ and let $\lambda'$ be an assembly design for $G'$ such that $\lambda'(v)=t_2$ for some $v \in G'$. Since $\hat{b_1} \in t_2$, there must exist $w \in G'$ such that $b_1 \in \lambda'(w)$. As $t_1$ contains all unhatted $b_1$ cohesive-end types, it follows that $t_1 \in P_\lambda'(G')$.
\end{proof}

We can extend this argument to include not just single edges, but entire directed paths in a given orientation. 

\begin{corollary}
    \label{directed path cor}
    Let $P_\lambda(G)$ be an assembling pot derived from an oriented vertex cover $(K, O)$ on a graph $G$. Suppose $v_1, v_2 \in K$ are such that there exists a directed path $v_1 \to w_1 \to w_2 \to \cdots \to w_k \to v_2$ in $O$ and $\lambda(v_1)=t_1, \lambda(v_2)=t_2$. Then for any graph $G' \in \mathcal{O}(P_\lambda(G))$, if $\lambda'$ is an assembly design for $G'$ and $\lambda'(v)=t_2$ for some $v\in G'$, then there exists $w \in V(G')$ such that $\lambda'(w)=t_1$.
\end{corollary}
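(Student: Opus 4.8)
The plan is to prove this by induction on the length $k$ of the directed path, using Lemma \ref{directed edge lemma} as the base case. The statement for $k = 0$ (i.e. a single directed edge $v_1 \to v_2$) is precisely Lemma \ref{directed edge lemma}. So I would set up the inductive hypothesis that the corollary holds for all directed paths of length at most $k$, and then consider a directed path $v_1 \to w_1 \to w_2 \to \cdots \to w_k \to v_2$ of length $k+1$.

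The key observation is that a directed path decomposes into a shorter directed path followed by a single directed edge. Concretely, the path $v_1 \to w_1 \to \cdots \to w_k \to v_2$ consists of the directed path $v_1 \to w_1 \to \cdots \to w_k$ (of length $k$) together with the single directed edge $w_k \to v_2$. Let $t_1 = \lambda(v_1)$, let $s = \lambda(w_k)$, and let $t_2 = \lambda(v_2)$. First I would apply Lemma \ref{directed edge lemma} to the edge $w_k \to v_2$: given an assembly design $\lambda'$ for some $G' \in \mathcal{O}(P_\lambda(G))$ with $\lambda'(v) = t_2$ for some $v \in V(G')$, the lemma guarantees there exists $u \in V(G')$ with $\lambda'(u) = s$. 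Now I apply the inductive hypothesis to the length-$k$ directed path $v_1 \to w_1 \to \cdots \to w_k$: since $\lambda'$ is an assembly design for $G' \in \mathcal{O}(P_\lambda(G))$ and $\lambda'(u) = s = \lambda(w_k)$ for some $u \in V(G')$, there exists $w \in V(G')$ with $\lambda'(w) = t_1$. This is exactly the desired conclusion.

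One subtlety worth addressing is the degenerate case where the same tile type appears at multiple vertices along the path, or where $v_1$, the $w_i$, and $v_2$ are not all distinct as tile types — but this causes no problem, since the argument only ever chains together "if tile type $t_2$ appears, then tile type $s$ appears" implications, and these compose regardless of whether the tile types coincide. A second point: each $w_i$ lies in $K$ (being on the directed path within the oriented vertex cover $O$), so each $w_i$ is a source vertex and $\lambda(w_i)$ is a well-defined tile type, which is what lets us invoke both Lemma \ref{directed edge lemma} and the inductive hypothesis at each step.

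I do not expect a genuine obstacle here — the corollary is essentially a transitivity/telescoping statement and the real content is already in Lemma \ref{directed edge lemma}. The only thing to be careful about is stating the induction cleanly (on path length, with the single-edge case as base) and making sure the inductive hypothesis is applied to a path that starts at $v_1$ and ends at a vertex whose tile type has just been shown to occur in $G'$.
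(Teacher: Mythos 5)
Your proposal is correct and follows essentially the same route as the paper: the paper also chains Lemma \ref{directed edge lemma} inductively along the directed path, peeling off one edge at a time starting from $v_2$. Your write-up is just a more formally stated induction on path length, with the same content.
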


\begin{proof}
    By Lemma \ref{directed edge lemma}, if $t_2 \in P_\lambda'(G')$, then $\lambda(w_k) \in P_\lambda'(G')$ as well, since there exists a directed edge from $w_k$ to $v_2$. 
    Applying Lemma \ref{directed edge lemma} inductively through the $w_i$ vertices, we see that $\lambda(w_1) \in P_\lambda'(G')$, and thus $t_1 \in P_\lambda'(G')$. 
\end{proof}

With this result in mind, it is natural to consider orientations $O$ in which such a path exists between all pairs of vertices in $K$. In this case, an assembling pot of a graph $G'$ realized by a pot $P$ derived from a tuple $(K,O)$ would either contain all tiles in $P$ or none of them. Because any labeling of $G'$ must use some unhatted version of a cohesive-end type, and thus any resulting pot contains a tile associated with a vertex in $K$, this property would imply that all tiles associated with vertices of $K$ are used. The concept of strong connectivity describes this desirable property.

\begin{definition}
    We say that a directed graph $G$ is \textbf{strongly connected} if for every two vertices $v_1, v_2 \in V(G)$, there exists a directed path from $v_1$ to $v_2$. Similarly, an orientation of a graph $G$ is a \textit{strong orientation} if the orientation results in a strongly connected graph.
\end{definition}

To state our final result more succinctly, it's worthwhile to establish exactly which graphs $G$ can be given strong orientations. 
We recall the following definition and well-known theorem \cite{westGT}:

\begin{definition}
    We say that a graph $G$ is \textbf{2-edge-connected} if, for every edge $e \in G$, the graph $G - e$ is connected.
\end{definition}

\begin{theorem}
    Given a graph $G$, a strong orientation exists on $G$ if and only if $G$ is 2-edge-connected. 
\end{theorem}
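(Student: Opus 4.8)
This is Robbins' theorem, and I would prove the two implications separately. Throughout, note that the paper's definition of ``2-edge-connected'' ($G-e$ connected for every edge $e$) is exactly ``connected and bridgeless.''

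\emph{Strong orientation $\Rightarrow$ 2-edge-connected.} Suppose $O$ is a strong orientation of $G$. Since directed paths exist between every ordered pair of vertices, $G$ is connected. Now suppose, for contradiction, that some edge $e=\{u,v\}$ is a bridge, i.e. $G-e$ is disconnected. In $O$ the edge $e$ is oriented one way, say $u\to v$. Strong connectivity gives a directed simple path from $v$ to $u$. This path cannot traverse $e$: traversing $e$ would mean going $u\to v$, but $u$ is the final vertex of the path and (being simple) occurs there only. Hence the underlying undirected path lies in $G-e$ and joins $v$ to $u$, contradicting that $e$ is a bridge. So every edge of $G$ is a non-bridge, i.e. $G$ is 2-edge-connected.

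\emph{2-edge-connected $\Rightarrow$ strong orientation exists.} Here I would use the standard ear-decomposition characterization of 2-edge-connected graphs \cite{westGT}: $G$ can be written as a nested chain $G_0\subset G_1\subset\cdots\subset G_k=G$, where $G_0$ is a cycle and each $G_i=G_{i-1}\cup P_i$ for an \emph{ear} $P_i$ --- a path whose two endpoints (possibly equal, giving a closed ear) lie in $G_{i-1}$ and all of whose edges and internal vertices are new. Orient $G_0$ as a directed cycle, which is strongly connected. Inductively assume $G_{i-1}$ carries a strong orientation, write the ear $P_i$ as going from $x$ to $y$ with $x,y\in V(G_{i-1})$, and orient $P_i$ as a directed path from $x$ to $y$. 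Then in $G_i$: (i) $x$ reaches every vertex, since it reaches all of $G_{i-1}$ by hypothesis and reaches each internal vertex of $P_i$ by walking forward along $P_i$; and (ii) every vertex reaches $x$, since vertices of $G_{i-1}$ do by hypothesis and an internal vertex of $P_i$ walks forward along $P_i$ to $y\in G_{i-1}$ and then to $x$. Composing, any $a,b\in V(G_i)$ are joined by a directed path $a\to x\to b$, so $G_i$ is strongly connected; taking $i=k$ finishes the proof.

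\textbf{Main obstacle.} The substantive content is entirely in the second direction, and the one ingredient that is not a routine verification is the ear-decomposition theorem itself; I would simply cite it from \cite{westGT}. (If a self-contained argument were preferred, an alternative is to take a DFS tree of the connected graph $G$, orient each tree edge away from the root and each non-tree back edge from descendant to ancestor, and use bridgelessness: the tree edge above any subtree is not a bridge, so some back edge escapes that subtree to a strictly higher ancestor, which by induction on depth lets every vertex reach the root, while the root reaches everything down the tree.) Beyond that, the only care needed is bookkeeping the induction so that strong connectivity is preserved when each ear is added.
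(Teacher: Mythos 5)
Your proof is correct, but note that the paper does not actually prove this statement: it is Robbins' theorem, which the authors state as a ``well-known theorem'' and cite directly from \cite{westGT} without argument. So there is nothing in the paper to compare against step by step; your contribution is to supply the proof the paper delegates to the literature. What you wrote is the standard argument and it holds up: the easy direction correctly observes that a bridge oriented $u\to v$ would block any directed return path from $v$ to $u$ (a simple directed path ending at $u$ cannot traverse $u\to v$), and the hard direction correctly uses the closed-ear decomposition of 2-edge-connected graphs, orienting the base cycle cyclically and each ear as a directed path, with the induction routed through the ear's endpoint $x$. You are right to allow closed ears (endpoints coinciding), since that is the version of ear decomposition that characterizes 2-edge-connectivity rather than 2-vertex-connectivity, and you correctly flag that the only non-elementary ingredient is the ear-decomposition theorem itself, which you cite from the same source the paper uses. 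The alternative DFS/back-edge argument you sketch would also work and is more self-contained, at the cost of more bookkeeping.
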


The final hurdle for establishing our upper bound for $B_3$ comes in demonstrating that a derived pot $P$ does not realize graphs of lesser order than $G$. To resolve this problem, we invoke the following novel definition:

\begin{definition}
\label{neighborhood indepedence}
    Let $G$ be a graph and $K \subseteq V(G)$. We consider the vector space $\rr^{K}$ represented as maps $K \to \rr$. For each vertex $v \in V(G) - K$, we construct the vector $\rho_v$ by taking, for every $w \in K$:
    $$\rho_v(w) = \begin{cases}
        1 & \text{if } \{v, w\} \in E(G) \\
        0 & \text{else}
    \end{cases}.$$
    We say that $K$ is \textbf{neighborhood independent} if the set of vectors $\{\rho_v\}_{v \in V(G) - K}$ is linearly independent.
\end{definition}

It's important to note that this definition requires only for the set of vectors $\{\rho_v\}_{v \in V(G) - K}$, in the formal definition of a set, to be linearly independent, meaning that duplicate vectors will be removed before considering linear independence. Another view of this definition is to consider the minor $M$ of the adjacency matrix $A$ found by taking rows corresponding to $K$ and columns corresponding to $V(G) - K$. A new matrix $M'$ can be found by removing duplicate columns. If the columns of $M'$ are linearly independent, then $K$ is neighborhood independent. We consider a quick example and nonexample:

\begin{example}
    \label{cubocta neighborhood indep}
    Consider the vertex cover $K \subset V(G)$, where $G$ is the cuboctahedral graph in Figure \ref{fig:Neighborhood independence examples}, and $K$ is the set of vertices inducing the subgraph in orange. If we view $\rr^K$ as $\rr^8$, where the image under $w_1$ corresponds to the first coordinate, $w_2$ the second, and so on, we then have:
    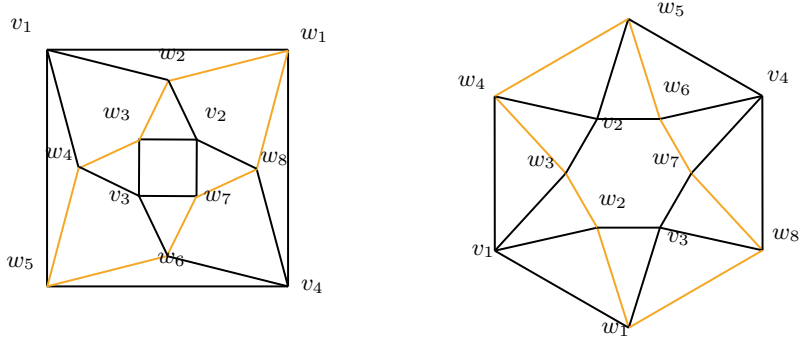
\begin{figure}
        \centering
        \tikzset{every picture/.style={line width=0.75pt}} 

\begin{tikzpicture}[x=0.75pt,y=0.75pt,yscale=-1,xscale=1]

\draw    (49.88,26.18) -- (49.88,145.65) ;
\draw    (171.43,26.18) -- (171.43,145.65) ;
\draw    (49.88,26.18) -- (171.43,26.18) ;
\draw    (49.88,145.65) -- (171.43,145.65) ;
\draw   (96.36,71.43) -- (125.4,71.57) -- (125.26,100.11) -- (96.22,99.97) -- cycle ;
\draw    (49.88,26.18) -- (65.76,85.59) ;
\draw    (155.55,86.24) -- (171.43,145.65) ;
\draw [color={rgb, 255:red, 245; green, 166; blue, 35 }  ,draw opacity=1 ]   (171.43,26.68) -- (155.55,86.74) ;
\draw [color={rgb, 255:red, 245; green, 166; blue, 35 }  ,draw opacity=1 ]   (65.76,86.09) -- (49.88,146.15) ;
\draw    (49.88,26.18) -- (110.99,41.49) ;
\draw    (110.33,130.34) -- (171.43,145.65) ;
\draw [color={rgb, 255:red, 245; green, 166; blue, 35 }  ,draw opacity=1 ]   (49.88,145.65) -- (110.33,130.34) ;
\draw [color={rgb, 255:red, 245; green, 166; blue, 35 }  ,draw opacity=1 ]   (110.99,41.99) -- (171.43,26.68) ;
\draw [color={rgb, 255:red, 245; green, 166; blue, 35 }  ,draw opacity=1 ]   (96.36,71.93) -- (65.76,86.09) ;
\draw [color={rgb, 255:red, 245; green, 166; blue, 35 }  ,draw opacity=1 ]   (155.86,86.44) -- (125.26,100.61) ;
\draw [color={rgb, 255:red, 245; green, 166; blue, 35 }  ,draw opacity=1 ]   (125.26,100.11) -- (110.33,130.34) ;
\draw [color={rgb, 255:red, 245; green, 166; blue, 35 }  ,draw opacity=1 ]   (110.99,41.99) -- (96.36,71.93) ;
\draw    (155.55,86.24) -- (125.4,71.57) ;
\draw    (96.22,99.97) -- (66.06,85.3) ;
\draw    (125.4,71.57) -- (110.99,41.49) ;
\draw    (110.63,130.05) -- (96.22,99.97) ;
\draw [color={rgb, 255:red, 245; green, 166; blue, 35 }  ,draw opacity=1 ]   (275.6,49.69) -- (311.5,88.57) ;
\draw [color={rgb, 255:red, 245; green, 166; blue, 35 }  ,draw opacity=1 ]   (374.82,88.57) -- (410.73,127.45) ;
\draw [color={rgb, 255:red, 245; green, 166; blue, 35 }  ,draw opacity=1 ]   (343.05,10.62) -- (358.99,61.15) ;
\draw [color={rgb, 255:red, 245; green, 166; blue, 35 }  ,draw opacity=1 ]   (327.33,115.99) -- (343.27,166.52) ;
\draw    (358.99,115.99) -- (343.27,166.52) ;
\draw    (343.05,10.62) -- (327.33,61.15) ;
\draw    (275.6,49.69) -- (327.33,61.15) ;
\draw    (358.99,61.15) -- (410.62,49.5) ;
\draw    (275.7,127.64) -- (327.33,115.99) ;
\draw    (275.7,127.64) -- (311.5,88.57) ;
\draw    (374.82,88.57) -- (410.62,49.5) ;
\draw    (358.99,115.99) -- (410.73,127.45) ;
\draw    (410.62,49.5) -- (410.73,127.45) ;
\draw    (343.05,10.62) -- (410.62,49.5) ;
\draw [color={rgb, 255:red, 245; green, 166; blue, 35 }  ,draw opacity=1 ]   (343.05,10.62) -- (275.6,49.69) ;
\draw [color={rgb, 255:red, 245; green, 166; blue, 35 }  ,draw opacity=1 ]   (410.73,127.45) -- (343.27,166.52) ;
\draw    (275.7,127.64) -- (343.27,166.52) ;
\draw    (275.6,49.69) -- (275.7,127.64) ;
\draw [color={rgb, 255:red, 245; green, 166; blue, 35 }  ,draw opacity=1 ]   (311.5,88.57) -- (327.33,115.99) ;
\draw [color={rgb, 255:red, 245; green, 166; blue, 35 }  ,draw opacity=1 ]   (358.99,61.15) -- (374.82,88.57) ;
\draw    (374.82,88.57) -- (358.99,115.99) ;
\draw    (327.33,61.15) -- (311.5,88.57) ;
\draw    (358.99,61.15) -- (327.33,61.15) ;
\draw    (358.99,115.99) -- (327.33,115.99) ;

\draw (30,9) node [anchor=north west][inner sep=0.75pt]  [font=\small] [align=left] {$\displaystyle v_{1}$};
\draw (128,54) node [anchor=north west][inner sep=0.75pt]  [font=\small] [align=left] {$\displaystyle v_{2}$};
\draw (79.5,96) node [anchor=north west][inner sep=0.75pt]  [font=\small] [align=left] {$\displaystyle v_{3}$};
\draw (176.5,141) node [anchor=north west][inner sep=0.75pt]  [font=\small] [align=left] {$\displaystyle v_{4}$};
\draw (176,14.32) node [anchor=north west][inner sep=0.75pt]  [font=\small] [align=left] {$\displaystyle w_{1}$};
\draw (104.5,24) node [anchor=north west][inner sep=0.75pt]  [font=\small] [align=left] {$\displaystyle w_{2}$};
\draw (47.5,74) node [anchor=north west][inner sep=0.75pt]  [font=\small] [align=left] {$\displaystyle w_{4}$};
\draw (28,129) node [anchor=north west][inner sep=0.75pt]  [font=\small] [align=left] {$\displaystyle w_{5}$};
\draw (104,127.32) node [anchor=north west][inner sep=0.75pt]  [font=\small] [align=left] {$\displaystyle w_{6}$};
\draw (127.5,95.82) node [anchor=north west][inner sep=0.75pt]  [font=\small] [align=left] {$\displaystyle w_{7}$};
\draw (156,76.32) node [anchor=north west][inner sep=0.75pt]  [font=\small] [align=left] {$\displaystyle w_{8}$};
\draw (76.5,53.82) node [anchor=north west][inner sep=0.75pt]  [font=\small] [align=left] {$\displaystyle w_{3}$};
\draw (262.59,122.9) node [anchor=north west][inner sep=0.75pt]  [font=\small]  {$v_{1}$};
\draw (328.09,58.9) node [anchor=north west][inner sep=0.75pt]  [font=\small]  {$v_{2}$};
\draw (361.09,116.58) node [anchor=north west][inner sep=0.75pt]  [font=\small]  {$v_{3}$};
\draw (411.59,35.08) node [anchor=north west][inner sep=0.75pt]  [font=\small]  {$v_{4}$};
\draw (328.09,162.58) node [anchor=north west][inner sep=0.75pt]  [font=\small]  {$w_{1}$};
\draw (326.59,98.04) node [anchor=north west][inner sep=0.75pt]  [font=\small]  {$w_{2}$};
\draw (290.59,78.04) node [anchor=north west][inner sep=0.75pt]  [font=\small]  {$w_{3}$};
\draw (255.59,38.04) node [anchor=north west][inner sep=0.75pt]  [font=\small]  {$w_{4}$};
\draw (356.09,1.04) node [anchor=north west][inner sep=0.75pt]  [font=\small]  {$w_{5}$};
\draw (359.09,40.9) node [anchor=north west][inner sep=0.75pt]  [font=\small]  {$w_{6}$};
\draw (353.59,77.4) node [anchor=north west][inner sep=0.75pt]  [font=\small]  {$w_{7}$};
\draw (414.09,114.9) node [anchor=north west][inner sep=0.75pt]  [font=\small]  {$w_{8}$};

\end{tikzpicture}
        \caption{Example subsets $K \subseteq V(G)$ for the cuboctahedron and antiprism graph $Ap_6$, respectively.}
        \label{fig:Neighborhood independence examples}
    \end{figure}
    $$
    \rho_{v_1} = \begin{bmatrix}
        1 \\ 1 \\ 0 \\ 1 \\ 1 \\ 0 \\ 0 \\ 0
    \end{bmatrix} \qquad \rho_{v_2} = \begin{bmatrix}
        0 \\ 1 \\ 1 \\ 0 \\ 0 \\ 0 \\ 1 \\ 1
    \end{bmatrix} \qquad \rho_{v_3} = \begin{bmatrix}
        0 \\ 0 \\ 1 \\ 1 \\ 0 \\ 1 \\ 1 \\ 0
    \end{bmatrix} \qquad \rho_{v_4} = \begin{bmatrix}
        1 \\ 0 \\ 0 \\ 0 \\ 1 \\ 1 \\ 0 \\ 1
    \end{bmatrix}.
    $$
    With some quick computation, it can be easily verified that these 4 vectors are linearly independent, and thus $K$ is neighborhood independent. 
\end{example}

\begin{example}
    \label{Ap_6 neighborhood dependence}
    Consider the vertex cover $K \subset V(G)$, where $G$ is the antiprism on 12 vertices $Ap_6$, as shown in Figure \ref{fig:Neighborhood independence examples} (a more formal definition of the Antiprism Graph is to come). As before, we construct the following vectors in $\rr^K$:
    $$\rho_{v_1} = \begin{bmatrix}
        1 \\ 1 \\ 1 \\ 1 \\ 0 \\ 0 \\ 0 \\ 0
    \end{bmatrix} \qquad \rho_{v_2} = \begin{bmatrix}
        0 \\ 0 \\ 1 \\ 1 \\ 1 \\ 1 \\ 0 \\ 0
    \end{bmatrix} \qquad \rho_{v_3} = \begin{bmatrix}
        1 \\ 1 \\ 0 \\ 0 \\ 0 \\ 0 \\ 1 \\ 1
    \end{bmatrix} \qquad \rho_{v_4} = \begin{bmatrix}
        0 \\ 0 \\ 0 \\ 0 \\ 1 \\ 1 \\ 1 \\ 1
    \end{bmatrix}.$$
    From here we see that $\rho_{v_1} - \rho_{v_2} - \rho_{v_3} + \rho_{v_4} = 0$, and thus $K$ is not neighborhood independent. 
\end{example}

We can now state our full conditions for establishing an upper bound on $B_3(G)$.

\begin{theorem}
    \label{upper bound theorem}
    For $G$ a $k$-regular graph, let $K \subseteq V(G)$ be a vertex cover of $G$ such that $K$ is neighborhood independent and the induced subgraph on $K$ is 2-edge-connected. Then $B_3(G) \leq \lvert K \rvert$.
\end{theorem}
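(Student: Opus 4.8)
The plan is to exhibit an explicit pot $P$ with at most $|K|$ bond-edge types that realizes $G$ and is valid in Scenario 3. Since the induced subgraph on $K$ is 2-edge-connected, the characterization above of graphs admitting strong orientations gives a strong orientation $O$ on the induced subgraph on $K$. Let $P = P_\lambda(G)$ be the pot derived from the oriented vertex cover $(K, O)$ as in the construction discussed earlier; it uses at most $|K|$ bond-edge types and realizes $G$. So it remains to show that every $G' \in \mathcal{O}(P)$ with $\#V(G') \le \#V(G)$ satisfies $G' \cong G$. Throughout I write $V(G) = K \sqcup L$, call the tiles $\lambda(w)$ for $w \in K$ the \emph{source tiles} (pairwise distinct, since each carries all unhatted copies of its own bond-edge type) and the tiles $\lambda(v)$ for $v \in L$ the \emph{leaf tiles} (each an all-hatted $k$-element tile encoding the neighbor set $N(v) \subseteq K$, i.e.\ the vector $\rho_v$).

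First I would show that any assembly design for any $G' \in \mathcal{O}(P)$ uses \emph{all} $|K|$ source tiles. Any assembly design uses some unhatted cohesive-end type, hence some source tile; then, since $O$ is strongly connected, applying \Cref{directed path cor} along a directed path between each ordered pair of vertices of $K$ shows that if one source tile appears, so do all of them. Next I would run a counting argument exploiting $k$-regularity: every tile of $P$ has exactly $k$ half-edges, so $\#E(G') = \tfrac{k}{2}\#V(G')$ and likewise for $G$, whence $\#V(G') \le \#V(G)$ is equivalent to $\#E(G') \le \#E(G)$. Counting edges of $G'$ by bond-edge type gives $\#E(G') = \sum_{w \in K} m_w d_w$, where $m_w \ge 1$ is the multiplicity of source tile $\lambda(w)$ and $d_w \ge 1$ is the number of unhatted $b_w$ half-edges it carries (here $d_w \ge 1$ because a strong orientation of a connected graph on at least two vertices gives every vertex an out-edge; the remaining degenerate cases $|K| = 1$ are trivial). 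Since $\#E(G) = \sum_w d_w$, this forces $\#V(G') \ge \#V(G)$, with equality only if $m_w = 1$ for every $w \in K$.

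It then remains to treat the equality case $\#V(G') = \#V(G)$, where each source tile is used exactly once. For each $w \in K$, balancing the $d_w$ unhatted $b_w$ half-edges against the hatted $\hat b_w$ half-edges (which sit on the single copies of the adjacent source tiles dictated by $O$, and on leaf tiles $\rho$ with $\rho(w) = 1$) yields a linear system $M'\vec q = \vec\ell$, where $\vec q$ records how many times each distinct leaf tile is used, $M'$ is the matrix whose columns are the distinct vectors $\rho_v$, and $\vec\ell(w)$ is the number of neighbors of $w$ in $L$. The tile-count vector of $G$ itself is a solution, so neighborhood independence of $K$ (\Cref{neighborhood indepedence}) makes $M'$ have linearly independent columns and hence forces $\vec q$ to equal that vector. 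Thus $G'$ uses exactly the same multiset of tiles as $G$; identifying the unique occurrences of the source tiles with $K$ and using that $G$ is simple, one checks that each leaf vertex of $G'$ is adjacent to precisely the source vertices named by its tile and each source--source edge of $G'$ is one of the edges of $O$, so no multiple edges or loops occur and $G' \cong G$. This shows $P$ is valid in Scenario 3 and hence $B_3(G) \le |K|$.

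I expect the equality case to be the crux. Everything up to there is bookkeeping with half-edge counts, but ruling out that $\mathcal{O}(P)$ contains a \emph{different} (and possibly non-simple) graph on the same number of vertices is exactly what the two hypotheses deliver: 2-edge-connectedness of the induced subgraph on $K$ makes the source tiles behave as a rigid block that appears all-or-nothing with multiplicity one, while neighborhood independence pins down the leaf-tile multiplicities uniquely. The secondary subtlety to watch is the handful of degenerate configurations (a vertex cover of size one, or a source vertex with no out-half-edge), which must be excluded or disposed of by hand.
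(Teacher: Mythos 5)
Your proof is correct and follows essentially the same route as the paper's: derive the pot from a strong orientation, use Corollary \ref{directed path cor} to force every source tile to appear, a counting argument exploiting $k$-regularity to force each source tile to appear exactly once with $\#V(G')=\#V(G)$ and all remaining vertices carrying leaf tiles, and finally neighborhood independence to pin down the leaf-tile multiplicities and conclude rigidity. The only difference is cosmetic: you count edges of $G'$ by bond-edge type ($\sum_w m_w d_w$), whereas the paper runs the equivalent bookkeeping via the signed half-edge count $g(t)$; your handling of the degenerate cases ($d_w \geq 1$, $|K|=1$) is if anything slightly more explicit than the paper's.
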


\begin{proof}
    Since the induced subgraph $K$ is 2-edge-connected, we can choose a strong orientation $O$ on $K$ and construct the pot $P$ derived from $(K, O)$. Let $\lambda$ be the associated assembly design, so that $P=P_\lambda(G)$. We wish to show that $P$ is valid in Scenario 3. Let $K = \{v_1, \cdots, v_n\}$ and $\lambda(v_i) = t_i$ for $1 \leq i \leq n$. Since each $v_i \in K$ is a source vertex for a distinct bond-edge type in $\Sigma(P)$, let the corresponding bond-edge type for $v_i$ be $b_i$. Let all remaining tile types in $P$ (which may appear multiple times in the realization of $G$) be denoted $\ell_1, \cdots, \ell_d$. Note that $\#P = n + d$.
    
    Consider a graph $G' \in \mathcal{O}(P)$ such that $\#V(G') \leq \#V(G)$. Because any labeling of $G'$ must use at least one bond-edge type $b_i \in \Sigma(P)$, any assembling pot (that is a subset of $P$) for $G'$ will necessarily contain the tile $t_i$, which contains all of the unhatted $b_i$ cohesive-end types. Because $O$ is strongly oriented, for any other $v_j \in K$ there exists a directed path from $v_j$ to $v_i$. By Corollary \ref{directed path cor}, this implies that $t_j$ is in any assembling pot for $G'$. Thus, we have that tiles $t_1, \cdots, t_n$ will be contained in any subset of $P$ realizing $G'$.

    We now wish to show that exactly one of each $t_i$ can be used in a labeling of $G'$. For every tile type $t \in P$, we assign a number $g(t)$, which we define as the number of unhatted cohesive-end types in $t$ minus the number of hatted cohesive-end types in $t$. Note that $g(t)$ disregards the specific bond-edge types; it only considers whether each cohesive-end type is hatted or unhatted. We will also use the notation $g(v)$ for a vertex $v \in G$, which is understood to be the value $g(t)$ where $\lambda(v) = t$ in the assembly design of $G$ under consideration.
    Since $G'$ is assumed to be fully labeled, every half-edge labeled with a hatted cohesive-end type is bonded to a half-edge labeled with an unhatted cohesive-end type. Thus, it follows that: $\displaystyle\sum_{v \in V(G')} g(v) = 0$. For each tile type $t_1, \cdots, t_n$, choose a vertex labeled with that tile type in the realization of $G'$, and correspondingly label these vertices $v_1', \cdots, v_n'$. Let $K'$ be this set of vertices. 
    
    Note that the vertices of $K$ and $K'$ are labeled using the exact same tile types, and so we have that $\sum_{v \in K} g(v) = \sum_{v \in K'} g(v)$. Since $\sum_{v \in V(G)} g(v) =0 = \sum_{v \in V(G')} g(v)$ to maintain the balance of hatted and unhatted edges in the whole graph, we can subtract the first equation from the second to see that \begin{equation}
    \label{eq1}
    \sum_{v \in V(G') - K'} g(v) = \sum_{v \in V(G) - K} g(v).\end{equation}
    We note now that for every vertex $v \not\in K$, $\lambda(v) = \ell_i$, where $\ell_i$ consists of only hatted cohesive-end types. Since $G$ is $k$-regular, we have $g(v) = -k$ and $\displaystyle\sum_{v \in V(G) - K} g(v) = -k\lvert V(G) - K \rvert$. Combining this with Equation \ref{eq1} gives the following: \begin{equation}
    \label{eq2}
    \sum_{v' \in V(G') - K'}g(v') = -k\lvert V(G) - K \rvert .
    \end{equation} 
    Since each of the $t_i$ tile types contain at least one unhatted cohesive end-type, we know that $g(t_i) > -k$. The assumption $\#V(G) \geq \#V(G')$ implies $\lvert V(G) - K \rvert \geq \lvert V(G') - K' \rvert$, so the summation in Equation \ref{eq2} has at most $\lvert V(G) - K \rvert$ terms. Furthermore, since all $t_i$ are such that $g(t_i) > -k$ and all $\ell_i$ are such that $g(\ell_i) = -k$, we find that $g(v') \geq -k$ for all terms in the summation of Equation $\ref{eq2}$. Thus, for the sum in Equation \ref{eq2} to be equal to $-k\lvert V(G) - K \rvert$, it must be the case that every tile in $V(G') - K'$ is of tile type $\ell_i$ for some $i$. It also follows that $\lvert V(G) - K \rvert = \lvert V(G') - K' \rvert$, and therefore $\# V(G) = \# V(G')$.

    Lastly, we need to show that the composition of $\ell_1, \cdots, \ell_m$ used by $G'$ is identical to that of $G$. Let $h_i$ denote the number of times $\ell_i$ is used in $G$, and $h_i'$ denote the number of times $\ell_i$ is used in $G'$. Consider now the vector space $\rr^K$. Recalling our definition of neighborhood independence, we are interested in the set of vectors $\{\rho_v\}_{v \in V(G) - K}$. Notably, two vertices in $V(G) - K$ will correspond to the same vector $\rho_v$ if and only if they are labeled using the same tile type $\ell_{i}$, as they would thus have the same neighbor set within $V(G)$. We will accordingly call these vectors $\rho_i$ corresponding to each $\ell_i$. We consider now the sum:
    $$S = (h_1 - h_1')\rho_1 + (h_2 - h_2')\rho_2 + \cdots + (h_d - h_d') \rho_d.$$
    We wish to show that $S=0$. To do this, we will show $S(v_j) = 0$ for any $v_j \in K$. 
    
    Recall that $\rho_i(v_j)$ will equal 1 if a vertex with tile type $\ell_i$ is adjacent to $v_j$ and will equal $0$ otherwise. This is equivalent to $\ell_i$ containing cohesive-end type $\hat b_j$. Because $\ell_i$ has been used $h_i$ times in $\rho_i$, we have that $h_i \cdot \rho_i(j)$ is the number of $\hat b_j$ labeled half-edges in $V(G) - K$ contributed by the tile type $\ell_i$. Thus, the sum $h_1\cdot \rho_1(v_j) + \cdots + h_d\cdot \rho_d(v_j)$ is the total number of $\hat b_j$ half-edges of vertices in $V(G) - K$. Similarly, $h_1'\cdot \rho_1(v_j) + \cdots + h_d' \cdot \rho_d(v_j)$ is the total number of $\hat b_j$ half-edges of vertices in $V(G') - K'$. However, both $V(G) - K$ and $V(G') - K'$ need exactly enough $\hat b_j$ half-edges to match remaining $b_j$ half-edges in $K$ and $K'$, which are equal amounts. This gives the following.
    $$\begin{aligned}
    0&= h_1\cdot \rho_1(v_j) + \cdots + h_d\cdot \rho_d(v_j) - \left(h_1'\cdot \rho_1(v_j) + \cdots + h_d' \cdot \rho_d(v_j)\right) \\
    &= (h_1 - h_1')\rho_1(v_j) + \cdots + (h_d - h_d')\rho_d(v_j) = S(v_j).
    \end{aligned}$$
    Because this holds for every $v_j$, we have that $S = 0$. By the linear independence of $\rho_1, \cdots, \rho_d$, we have that $h_i = h_i'$ for all $i$. Thus, the realizations of $G$ and $G'$ use the same composition of tiles. Because all tile types have a unique source vertex, there is only one way to arrange the edges, and thus no nonisomorphic graphs can be constructed, which satisfies Scenario 3. 
\end{proof}

We observe that $k$-regularity is used very little in this proof, but the counting argument in the two paragraphs following Equation \ref{eq2} hinges on converting information about edge counts to information about vertex counts, which we cannot do if $G$ is of non-homogenous degree. We note that Theorem \ref{upper bound theorem} may be  understood through the following observation:

\begin{remark}
Suppose that $G$ a $k$-regular graph. If we are given a neighborhood independent vertex cover $K\subseteq V(G)$ such that the induced subgraph on $K$ is 2-edge-connected, then we may construct a valid pot for $G$ using $|K|$ bond-edge types. In particular, this pot is the one derived from $(K, O)$, where $O$ is chosen to be any orientation on the induced subgraph of $K$.
\end{remark}

\section{Specific Families of Graphs}
\label{section4}

We now move on to bounding and determining the values of $B_3$ and $T_3$ for two $k$-regular graph families. The lower and upper bounds discussed in Sections \ref{sec: lower} and \ref{sec: upper} will be applied for all of the graph families in this section, thus demonstrating the utility of these bounds in analyzing a select variety of graphs.

\subsection{Rook's Graphs}

Rook's graphs are a highly symmetric graph family that represent the set of all legal moves of the rook chess piece on a chessboard. Formally, they are defined as the following:

\begin{definition}
The \textbf{rook's graph} $R_{m, n}$ has vertices corresponding to the squares on an $m\times n$ grid. Two vertices are adjacent if and only if they lie on the same row or column of the grid. 
\end{definition}

\begin{figure}[H]
    \centering
    \includegraphics[scale=0.625]{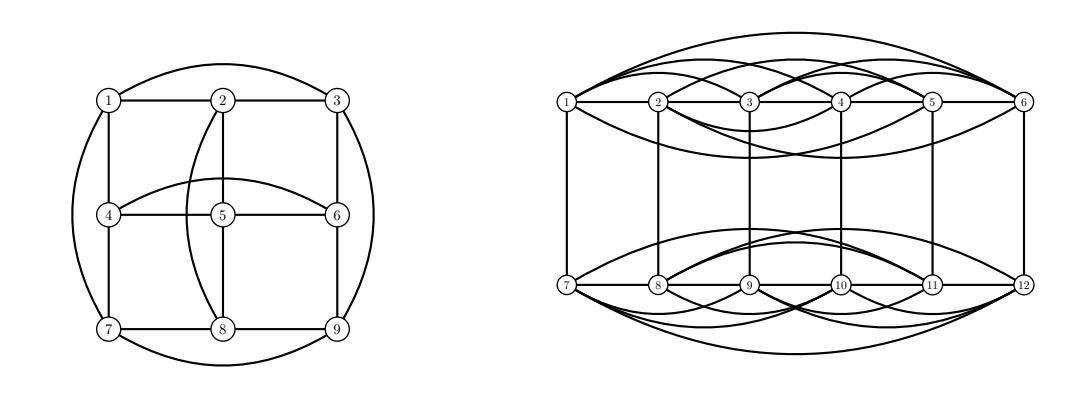}
    \caption{The rook's graph $R_{3, 3}$ (left) and the rook's graph $R_{2, 6}$ (right) \cite{rooksgraphassembly}.}
\end{figure}

Rook's graphs have been previously studied in Scenarios 1 and 2 in the flexible-tile model of DNA self-assembly (see \cite{rooksgraphassembly}). In this previous work, determining the values of $B_3$ and $T_3$ for rook's graphs in Scenario 3 was left as an open question. Using our upper and lower bounds, we may exactly determine $B_3$ and $T_3$ for the rook's graph family.

First, we use Theorem \ref{b3lowerbound} and Proposition \ref{t3lowerbound} to determine $T_3$ and a lower bound for $B_3$ for the rook's graphs:

\begin{proposition} \label{rooklowerbound}
    If $n, m \geq 4$, then $T_3(R_{m, n}) = mn$.
\end{proposition}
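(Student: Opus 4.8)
The plan is to apply Corollary~\ref{tileUpperBoundCor}, which states that $T_3(G) = \#V(G)$ whenever $G$ is unswappable and no two distinct vertices share a neighbor set. Since $\#V(R_{m,n}) = mn$, it suffices to verify these two hypotheses for $R_{m,n}$ when $m, n \geq 4$.

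First I would show that $R_{m,n}$ has no two vertices with the same neighbor set. Label a vertex by its grid coordinates $(i,j)$. The neighbor set of $(i,j)$ consists of all $(i, j')$ with $j' \neq j$ together with all $(i', j)$ with $i' \neq i$. If $(i,j) \neq (i',j')$, then one checks directly that the neighbor sets differ: for instance, if $i \neq i'$, the vertex $(i', j)$ (or some vertex in row $i'$) distinguishes them; one must be slightly careful with small cases, which is exactly why $m, n \geq 4$ is assumed (so that each row and column has at least one "spare" vertex beyond the ones forced to coincide). Note that $(i,j)$ is never its own neighbor since $R_{m,n}$ is simple, so this observation should be used when comparing $N((i,j))$ and $N((i',j'))$ where the two vertices are adjacent.

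Second, and this is the main obstacle, I would prove that $R_{m,n}$ is unswappable for $m, n \geq 4$: for every pair of disjoint edges $\{u,v\}, \{s,t\}$, the swapped graph $G' = G - \{\{u,v\},\{s,t\}\} + \{\{u,t\},\{s,v\}\}$ is not isomorphic to $R_{m,n}$. The natural approach is to find an isomorphism invariant that a swap always changes. One promising candidate is the degree sequence together with local edge-count data: $R_{m,n}$ is $(m+n-2)$-regular, so after a swap $G'$ is still regular only if $u, v, s, t$ all had appropriate incident-edge relationships — in fact a swap preserves all degrees, so regularity alone does not help. Instead I would count, for each edge, the number of triangles through it (equivalently, common neighbors of its endpoints): in $R_{m,n}$ two vertices in the same row have exactly $m-2$ common neighbors and two in the same column have $n-2$, and non-adjacent vertices sharing a row-and-column "corner" have exactly $2$ common neighbors. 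A swap removes two edges and adds two new edges $\{u,t\}$ and $\{s,v\}$; I would argue by case analysis on how the four endpoints are distributed among rows and columns that at least one of the new edges joins two vertices whose common-neighbor count (or, if the vertices were already adjacent, creating a multi-edge — but in the simple-graph swap they are non-adjacent, giving common-neighbor count $\leq 2 < m-2, n-2$ when $m,n \geq 4$) cannot occur as an edge-local invariant value in $R_{m,n}$, so $G'$ fails to even be edge-transitive-compatible with $R_{m,n}$. The cleanest version is: in $R_{m,n}$ every edge lies in a clique of size $\max(m,n) \geq 4$; after a swap, at least one new edge $\{u,t\}$ has endpoints in different rows and different columns, hence $u$ and $t$ have at most $2$ common neighbors in $G'$, so $\{u,t\}$ lies in no triangle-rich clique, contradicting the structure of $R_{m,n}$ where $m, n \geq 4$ forces every edge into a $K_4$.

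Assembling these pieces: once unswappability and the distinct-neighbor-set property are established, Corollary~\ref{tileUpperBoundCor} immediately yields $T_3(R_{m,n}) = \#V(R_{m,n}) = mn$. I expect the distinct-neighbor-set verification to be routine, and the unswappability argument to require the careful case analysis sketched above — in particular handling the degenerate configurations where the swapped edges share a row or column and the new edges might accidentally land "inside" a row or column of the grid. Keeping track of which of the four vertices $u, v, s, t$ are collinear in the grid is the bookkeeping that must be done with care.
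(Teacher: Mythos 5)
Your plan is essentially the paper's proof: it establishes unswappability of $R_{m,n}$ via the same $K_4$ invariant (every edge of $R_{m,n}$ lies in a $K_4$ since $m,n\geq 4$, while any non-multi-edge swap produces an edge joining vertices that differ in both coordinates and hence lies in no $K_4$), verifies that distinct vertices have distinct neighbor sets, and concludes via Corollary~\ref{tileUpperBoundCor}. The one detail to nail down is that ``at most $2$ common neighbors'' alone does not preclude a $K_4$ through the new edge; as the paper notes explicitly, the two common neighbors $(a,d)$ and $(c,b)$ of $(a,b)$ and $(c,d)$ themselves differ in both coordinates and so are non-adjacent.
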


\begin{proof}
    We wish to show that $R_{m, n}$ is unswappable. Let $\{v_1, v_2\}$ and $\{v_3, v_4\}$ be two edges of $R_{m,n}$. If $v_1$ and $v_4$ (or $v_2$ and $v_3$) do not differ in both coordinates, then a multi-edge would be formed by swapping the edges; thus, the only way these edges could be swappable is if $v_1$ and $v_4$ differ in both coordinates. Thus, suppose that $v_1$ and $v_4$ differ in both coordinates. We wish to show that, even in this case, the edges are not swappable. 
    
    We proceed by counting copies of $K_4$ in our graph before and after swapping. First, consider $R_{m,n}$ with no swaps made. Note that $v_1$ and $v_2$ will differ in exactly one coordinate. Without loss of generality, suppose $v_1$ and $v_2$ have the same first coordinate. Since $m, n > 3$, there exist at least two other vertices corresponding to coordinate pairs that also have the same first coordinate as $v_1$ and $v_2$. Because of this, we know that $\{v_1, v_2\}$ is part of at least one copy of $K_4$. Similarly, $\{v_3, v_4\}$ will be part of at least one copy of $K_4$. 
    
    Now, consider the graph created by swapping the edges $\{v_1,v_2\}$ and $\{v_3,v_4\}$, such that the newly formed edges are $\{v_1, v_4\}$ and $\{v_3, v_2\}$. We will show that $\{v_1, v_4\}$ is not part of any new copies of $K_4$. As noted earlier, $v_1$ and $v_4$ differ in both coordinates. Assume, without loss of generality, these coordinates are $v_1 = (a, b)$ and $v_2 = (c, d)$. It follows that the only vertices adjacent to both are $(a, d)$ and $(c, b)$. However, these two vertices are not connected to each other. Thus, $\{v_1, v_4\}$ cannot create any new copies of $K_4$. A similar argument can be applied to $\{v_3, v_2\}$. Because copies of $K_4$ are removed, and no more are added, the graph formed as a result of the edge swap is not isomorphic to $R_{m, n}$. Thus, our graph is unswappable. 

    Finally, for every pair of vertices $v_1, v_2 \in R_{m, n}$, their neighbor sets must be different, as they can agree in at most one coordinate. It follows from Proposition \ref{t3lowerbound} that $T_3(R_{m, n}) = \#V(R_{m,n}) = mn$.
\end{proof}

Now, we use Theorem \ref{upper bound theorem} to upper bound $B_3$ for rook's graphs in order to exactly determine $B_3$.

\begin{proposition}  \label{rooksBbound}
    If $m, n \geq 3$ and $R_{m, n}$ is unswappable, then $B_3(R_{m, n}) = mn - n$.
\end{proposition}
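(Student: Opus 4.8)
The plan is to sandwich $B_3(R_{m,n})$ between the lower bound of Theorem \ref{b3lowerbound} and the upper bound of Theorem \ref{upper bound theorem}, showing both equal $mn-n$. Throughout I would assume without loss of generality that $n\le m$ (since $R_{m,n}\cong R_{n,m}$), and record that $R_{m,n}$ is $k$-regular with $k=(m-1)+(n-1)=m+n-2$, as required by Theorem \ref{upper bound theorem}. For $m,n\ge 4$ the graph $R_{m,n}$ is unswappable by (the proof of) Proposition \ref{rooklowerbound}; for $n=3$ this is an added hypothesis, but it is used only for the lower bound.

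\emph{Lower bound.} First I would identify a minimum vertex cover of $R_{m,n}$. An independent set of $R_{m,n}$ is a set of grid cells no two of which lie in a common row or column, so it has at most $\min(m,n)=n$ cells, and the diagonal $D=\{(i,i):1\le i\le n\}$ realizes this. Hence the complement of any vertex cover is an independent set of size at most $n$, so every vertex cover has at least $mn-n$ vertices; and $V(R_{m,n})\setminus D$ is a vertex cover with exactly $mn-n$ vertices. So the minimum vertex cover of $R_{m,n}$ has size $mn-n$, and since $R_{m,n}$ is unswappable, Theorem \ref{b3lowerbound} yields $B_3(R_{m,n})\ge mn-n$.

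\emph{Upper bound.} Next I would take $K=V(R_{m,n})\setminus D$ and verify the two hypotheses of Theorem \ref{upper bound theorem}. For neighborhood independence, note that for $(i,i)\in D$ the vector $\rho_{(i,i)}\in\rr^{K}$ is the indicator of the cells of $K$ lying in row $i$ or column $i$; evaluating a relation $\sum_{i=1}^{n}c_i\rho_{(i,i)}=0$ at a cell $(i,j)\in K$ with $i,j\in\{1,\dots,n\}$ and $i\ne j$ gives $c_i+c_j=0$, and since $n\ge 3$ this system forces every $c_i=0$ (the $\rho_{(i,i)}$ are moreover pairwise distinct, so no de-duplication issue arises). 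For $2$-edge-connectivity of the induced subgraph on $K$: connectivity is a routine check (in each row and each column the cells of $K$ form a clique on at least two vertices, and any two such cliques meet in a common cell), and bridgelessness follows by exhibiting a short cycle through each edge. A row edge $\{(a,b),(a,c)\}$ lies on a triangle whenever row $a$ of the induced subgraph has a third cell of $K$; this fails only when $n=3$ and $\{a,b,c\}=\{1,2,3\}$, in which case, provided $m\ge 4$, any row $a'\notin\{a,b,c\}$ gives a $4$-cycle through $(a,b),(a',b),(a',c),(a,c)$. Column edges are handled symmetrically, and the one remaining case $m=n=3$ is checked directly: the induced subgraph on $K$ is then a $6$-cycle, which is $2$-edge-connected. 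Theorem \ref{upper bound theorem} now gives $B_3(R_{m,n})\le |K|=mn-n$, completing the argument. (Note the upper bound does not use unswappability; only the lower bound does.)

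The hard part will be the $2$-edge-connectivity verification. The clean ``every edge lies in a triangle'' argument works immediately when $m,n\ge 4$, but when $m=3$ or $n=3$ some row or column of the induced subgraph has been cut down to only two cells, and one must instead route a cycle through a different row or column; the fully degenerate case $m=n=3$, where the induced subgraph collapses to $C_6$, has to be dispatched by hand. The remaining ingredients — pinning down the minimum vertex cover and checking neighborhood independence — are short and cause no trouble.
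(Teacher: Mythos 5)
Your proposal is correct and follows essentially the same route as the paper: the same diagonal-complement vertex cover $K$, the lower bound via Theorem \ref{b3lowerbound}, and the upper bound via Theorem \ref{upper bound theorem} with the same neighborhood-independence computation, the only real difference being that you certify bridgelessness by putting each edge of the induced subgraph on a triangle or $4$-cycle, whereas the paper exhibits an explicit alternate path between the endpoints of each edge. One small inaccuracy in your connectivity sketch: the row-$i$ clique and the column-$i$ clique of $K$ do \emph{not} meet in a common cell (that cell would be the deleted diagonal vertex $(i,i)$), so not every pair of cliques intersects directly --- but since $m,n\ge 3$ any two such cliques are joined through a third, and connectivity still holds.
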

\begin{proof}
    First, we note that the largest independent set of $R_{m, n}$ will be of size $n$, found by taking the collection of vertices at coordinates $(1, 1), (2, 2), \cdots, (n, n)$. Any larger set would include two vertices with equal first or second coordinates, which would be adjacent to one another. It follows that the smallest vertex cover is of size $mn - n$. By Theorem \ref{b3lowerbound},  $B_3(R_{m,n}) \geq mn-n$.

    Since we have already demonstrated that $B_3(R_{m, n}) \geq mn -n$, we need only show a matching upper bound. Consider the subset $K$ defined as follows: $K = V(G) - \{(i, i) | 1 \leq i \leq n\}$. First, this set will form a vertex cover, as for any edge $\{v_1, v_2\} \in R_{m, n}$, the vertices must differ in only one coordinate, and so they cannot both be of the form $(i, i)$. We apply Theorem \ref{upper bound theorem} using this vertex cover.
    
    We claim that the subgraph induced by $K$ is 2-edge-connected. Let $G$ denote the subgraph induced by $K$. First, we need to demonstrate that $G$ is connected. Consider two vertices $v, w \in G$ corresponding to coordinates $(a, b)$ and $(c, d)$, respectively. First, if either $a = c$ or $b = d$, then $v$ and $w$ are adjacent in $G$. Now, suppose $a \neq c$ and $b \neq d$. If $b \neq c$, then we can construct a path from $v$ to the vertex at coordinates $(c,b)$ to the vertex $w$. A similar path can be constructed if $a \neq d$. For the final case, suppose that $a = d$ and $b = c$. To restate, we wish to find a path $(a, b) \to (b, a)$. Because $m, n \geq 3$, we can choose some $1 \leq f \leq n$ such that $f \notin \{a, b\}$. Then $(a, b) \to (f, b) \to (f, a) \to (b, a)$ is a path in $G$.

    To show $2$-connectedness, consider an edge $\{v_1, v_2\}$ in $G$. We claim $K - \{v_1, v_2\}$ is connected. Because $K$ is connected, it is sufficient to show that there exists a path $v_1$ to $v_2$ within $K$. Suppose that $v_1 = (a, b)$ and $v_2 = (a, d)$ (the reasoning is analogous if the vertices differ in the first coordinate). Because these vertices are within $K$, we have that $a \neq b$ and $a \neq d$. We can thus observe the following path within $K$:
    $$(a, b) \to (d, b) \to (d, a) \to (b, a) \to (b, d) \to (a, d).$$
    Therefore, $G$ is 2-edge-connected. It only remains to be shown that $K$ is neighborhood independent. Recalling the definition of $\rho_v$ from Definition \ref{neighborhood indepedence}, we seek to show linear independence of the set $\{\rho_{(i, i)}\}_{1 \leq i \leq n}$. Suppose there exist $h_i \in \rr$ such that
    $$0 = \sum_{1 \leq i \leq n} h_i \rho_{i, i} = S \in \rr^K.$$
    We proceed by considering three arbitrary $1 \leq i_{1}, i_{2}, i_{3}\leq n$. Note that $0 = S((i_1, i_2)) = h_{i_1} + h_{i_2}$, as only $1 = \rho_{(i_1, i_1)}((i_1, i_2)) = \rho_{(i_1, i_1)}((i_1, i_2))$ will be nonzero. Similarly, $0 = h_{i_2} + h_{i_3}$ and $0 = h_{i_3} + h_{i_1}$. We now see that $h_{i_1} = -h_{i_2} = h_{i_3} = -h_{i_1}$, implying $h_{i_1}=0$. This process can be repeated to demonstrate $h_i = 0$ for each $i$, proving linear independence. 
    By Theorem \ref{upper bound theorem}, $B_3(R_{m,n}) \leq \#V(K) = mn-n$. Thus, $B_3(R_{m,n}) = mn-n.$
\end{proof}

Note that since $m, n\geq 4$ implies unswappability, Proposition \ref{rooksBbound} determines $B_3(R_{m, n}) = mn-n$ for all $m, n \geq 4$. With these two results, we have exactly determined the values of $B_3(R_{m, n})$ and $T_3(R_{m, n})$ for large enough $m,n$. This resolves the open problem posed in \cite{rooksgraphassembly}.

\begin{example}
We now consider an example pot construction for the rook's graph $R_{3, 3}$, following the proof of Theorem \ref{upper bound theorem}. We start by marking our vertex cover $K$, which will be the complement of the set $(1, 1), (2, 2), (3, 3)$. As stated in the proof, the complement must be 2-connected, and so we choose some strong orientation. We choose $(1, 2) \to (1, 3) \to (2, 3) \to (2, 1) \to (3, 1) \to (3, 2) \to (1, 2)$. To each of the vertices in the complement, we assign a bond-edge type. We assign:
$$(1, 2) \leftrightarrow o, 
\qquad (1, 3) \leftrightarrow b,
\qquad (2, 3) \leftrightarrow g,
\qquad (2, 1) \leftrightarrow p,
\qquad (3, 1) \leftrightarrow n,
\qquad (3, 2) \leftrightarrow r.
$$
This process is depicted in Figure \ref{fig:RooksExample}, and the choice bond-edge variables roughly correspond to the first letter of the color representing them (e.g. $o$ for orange). To complete the pot, orient the remaining edges away from $K$. Finally, assign to each edge the bond-edge type of the vertex in $K$ it originates from. This process leads to the following pot:
$$
P = \{\{o^3,\hat r\}, 
\{b^3, \hat o\}, 
\{g^3, \hat b\}, 
\{p^3, \hat g\},
\{n^3, \hat p\},
\{r^3, \hat o\},
\{\hat o, \hat n, \hat b, \hat p\},
\{\hat o, \hat r, \hat g, \hat p\},
\{\hat n, \hat r, \hat g, \hat b\}
\}.
$$
Here, the first six tiles correspond to vertices in the vertex cover, and the rest correspond to its complement. 

Note that Proposition \ref{rooklowerbound}  does not apply directly to this graph due to the small dimensions. However, since the graph can be checked to be unswappable via Appendix \ref{appendixA}, we may use Proposition \ref{t3lowerbound} to show that this pot minimizes tile types. Proposition \ref{rooksBbound} also shows that this pot minimizes bond-edge types, showing that this pot is optimal for both tile types and bond-edge types.

\begin{figure}
    \centering
    \input{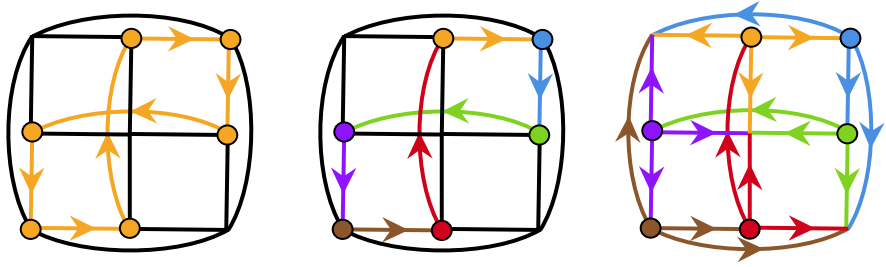}
    \caption{Steps for an optimal construction for $R_{3, 3}$.}
    \label{fig:RooksExample}
\end{figure}

\end{example}

\subsection{Kneser Graphs}

Next, consider the Kneser graph family. Intuitively, it seems that swapping any edge pairs would break the underlying combinatorial structure of these graphs. 

\begin{definition}
    The \textit{Kneser Graph} $Kn(n, i)$ for $n, i \in \zz$, $n > i > 0$, is defined in \cite{lovasz1978kneser} as follows: Fix the $n$ element set $[n] = \{1, \cdots, n\}$. The vertices of $Kn(n, i)$ are all subsets of $[n]$ of size $i$. An edge exists between two subsets of $[n]$, $A$ and $B$, if $A \cap B = \emptyset$. 
\end{definition}

One notable member of the Kneser graph family is $Kn(5, 2)$, which is isomorphic to the Petersen graph.

 \begin{figure}[H]
    \centering
    \import{./}{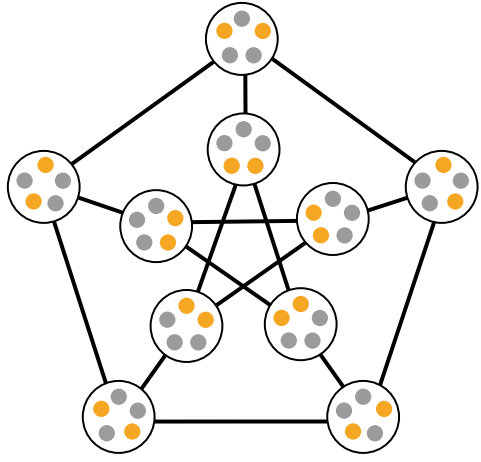}
    \caption{The graph $Kn(5, 2)$, also known as the Petersen graph. Each vertex is represented by a 2-element subset of $\{1, 2, 3, 4, 5\}$, and edges are between disjoint subsets.}
\end{figure}

The Petersen graph is a particularly clear example of unswappability. This is because it is known that the smallest cycle in the Petersen graph has length 5, while swapping any two disjoint edges creates a cycle of length 4, as shown previously in Figure \ref{petersen_unswap}. Using this as motivation, we may show that general Kneser graphs with some parameter constraints are unswappable.

\begin{lemma}
Let $Kn(n, k)$ denote the Kneser graph on $n, k$. If $k>1$ and $n\geq 3k$, then $Kn(n, k)$ is unswappable.
\end{lemma}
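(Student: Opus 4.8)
The plan is to exploit the rigidity of Kneser graphs through a local invariant: the number of triangles on each edge. In $Kn(n,k)$, if two vertices $X,Y$ are adjacent (i.e.\ $X\cap Y=\emptyset$), then their common neighbours are precisely the $k$-subsets of $[n]\setminus(X\cup Y)$, so \emph{every} edge lies on exactly $\binom{n-2k}{k}$ triangles. Consequently, if some admissible swap produced a graph $G'$ isomorphic to $Kn(n,k)$, every edge of $G'$ would also have to lie on exactly $\binom{n-2k}{k}$ triangles, and it suffices to exhibit a single edge of $G'$ that violates this. I would first dispose of degenerate swaps: given disjoint edges $\{A,B\}$ and $\{C,D\}$ (so $A\cap B=\emptyset$, $C\cap D=\emptyset$, and $A,B,C,D$ distinct), the swapped graph is $G'=G-\{\{A,B\},\{C,D\}\}+\{\{A,D\},\{C,B\}\}$; if $A\cap D=\emptyset$ or $B\cap C=\emptyset$ then one of the new edges already lies in $G$, so $G'$ is a multigraph and cannot be isomorphic to the simple graph $Kn(n,k)$. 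Hence I may assume $j':=|A\cap D|\ge 1$ and $|B\cap C|\ge1$, so that $\{A,D\}$ is a genuinely new edge of $G'$.

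Next I would count the triangles on $\{A,D\}$ in $G'$. Since $N_{G'}(A)=(N_G(A)\setminus\{B\})\cup\{D\}$ and $N_{G'}(D)=(N_G(D)\setminus\{C\})\cup\{A\}$, a short bookkeeping argument shows this count equals $\binom{n-2k+j'}{k}-\varepsilon$, where $\binom{n-2k+j'}{k}=|N_G(A)\cap N_G(D)|$ and $\varepsilon=[\,B\cap D=\emptyset\,]+[\,A\cap C=\emptyset\,]\in\{0,1,2\}$ (neither $A$ nor $D$ contributes, as they are not in their own $G'$-neighbourhoods). So $G'\not\cong Kn(n,k)$ unless $\binom{n-2k+j'}{k}-\binom{n-2k}{k}=\varepsilon\le 2$. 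Because $n\ge 3k$ gives $n-2k\ge k>k-1$, we have $\binom{n-2k+j'}{k}-\binom{n-2k}{k}\ge\binom{n-2k}{k-1}\ge\binom{k}{k-1}=k\ge 2$, and a direct check (using Pascal's rule for $j'\ge2$) shows equality with a value $\le2$ forces $j'=1$, $\binom{n-2k}{k-1}=2$, and $\varepsilon=2$; the middle equation has the unique solution $(n,k)=(6,2)$. Thus for all $n\ge 3k$, $k>1$, $(n,k)\ne(6,2)$, the edge $\{A,D\}$ of $G'$ lies on strictly more than $\binom{n-2k}{k}$ triangles, so $G'\not\cong Kn(n,k)$.

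The remaining case is $Kn(6,2)$ with $|A\cap D|=1$, $B\cap D=\emptyset$, and $A\cap C=\emptyset$. Here $B$ and $C$ are two of the three $2$-subsets of the $3$-element set $[6]\setminus(A\cup D)$; let $W$ be the third. Then $W\cap A=W\cap D=\emptyset$, so $\{A,W\}$ is an edge of $G$ that the swap leaves untouched, and the new edge $\{A,D\}$ makes $D$ a common neighbour of $A$ and $W$ in $G'$ ($D\cap W=\emptyset$ while $D\cap A\ne\emptyset$, so this is a new common neighbour), whereas no old common neighbour of $A,W$ is lost, since the only deleted edge at $A$ is $\{A,B\}$ and $B\cap W\ne\emptyset$. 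Hence $\{A,W\}$ lies on $\binom{6-4}{2}+1=2$ triangles in $G'$, while every edge of $Kn(6,2)$ lies on exactly $1$; so $G'\not\cong Kn(6,2)$. (Alternatively, one may invoke that $Kn(6,2)$ is the unique strongly regular graph with parameters $(15,6,1,3)$ and then just note $G'$ has an edge on two triangles.) Putting the cases together, no admissible swap yields a graph isomorphic to $Kn(n,k)$, so $Kn(n,k)$ is unswappable.

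I expect the main obstacle to be the exceptional case $Kn(6,2)$: the ``new edge $\{A,D\}$'' invariant is exactly on the boundary there and can still see precisely one triangle, so one must locate a different witness edge and check carefully that the deleted edge $\{A,B\}$ does not cancel the common neighbour gained from $\{A,D\}$. A secondary, more routine obstacle is fixing the convention for a swap that creates a parallel edge and organizing the inequality $\binom{n-2k+j'}{k}-\binom{n-2k}{k}>2$ cleanly across the ranges of $k$ and $n$ (the step where $n\ge 3k$, rather than merely $n\ge 3k-1$, is genuinely used).
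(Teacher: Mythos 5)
Your argument is correct and follows essentially the same route as the paper's proof: both hinge on counting triangles, both reduce to the inequality $\binom{n-2k+j'}{k}-\binom{n-2k}{k}\geq\binom{n-2k}{k-1}\geq k$ via $n\geq 3k$, and both isolate $Kn(6,2)$ as the boundary case. The two genuine differences are in your favor. First, you use the \emph{per-edge} triangle count as the invariant (every edge of $Kn(n,k)$ lies on exactly $\binom{n-2k}{k}$ triangles, so a single offending edge of $G'$ suffices), which lets you compute the count on the new edge $\{A,D\}$ exactly as $\binom{n-2k+j'}{k}-\varepsilon$ rather than settling for the lower bound the paper uses in its global count. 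Second, and more substantively, you dispose of the exceptional case $Kn(6,2)$ by hand, exhibiting the witness edge $\{A,W\}$ with $W$ the third $2$-subset of $[6]\setminus(A\cup D)$ and checking that it gains the common neighbour $D$ without losing $B$; the paper only asserts that this case "has been confirmed to be unswappable via computation." Your treatment therefore makes the lemma fully self-contained, which is a small but real improvement.
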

\begin{proof}
We claim that any swap of disjoint edges will change the number of triangles (i.e. 3-cycles) in $Kn(n,k)$, thus showing that any swap results in a nonisomorphic graph.

Suppose the edges $\{v_1, v_2\}$ and $\{v_3, v_4\}$ are swapped, such that the newly formed edges are $\{v_1, v_3\}$ and $\{v_2, v_4\}$. Let $V_1, V_2, V_3, V_4$ denote the subsets of $[n]$ that correspond to the vertices $v_1, v_2, v_3, v_4$ respectively. Note that $V_1, V_2$ are disjoint subsets of $[n]$ of size $k$, so $|V_1\cap V_2| = 2k$. This means that there are ${n-2k \choose k}$ other subsets of size $k$ that are also disjoint with $V_1$ and $V_2$ (choosing $k$ elements out of the $n-2k$ that are in neither $V_1$ or $V_2$). In particular, this shows that there are ${n-2k\choose k}$ triangles in the graph that use the edge $\{v_1, v_2\}$, and thus there are ${n-2k\choose k}$ triangles removed by removing this edge. Similarly, removing the edge $\{v_3, v_4\}$ also removes ${n-2k\choose k}$ triangles.

Note that if the newly formed edges $\{v_1, v_3\}$ or $\{v_2, v_4\}$ already existed in the original graph prior to swapping, then the swap would cause a multiple-edge, and therefore a nonisomorphic graph, to form. Thus, we assume that neither $\{v_1, v_3\}$ nor $\{v_2, v_4\}$ exists in $Kn(n, k)$. This means that $V_1$ and $V_3$ cannot be disjoint, and that $V_2$ and $V_4$ cannot be disjoint. In other words, $|V_1\cap V_3|\geq1$ and $|V_2\cap V_4| \geq 1$. 

After removing $\{v_1, v_2\}$ and $\{v_3, v_4\}$ from the graph, consider the number of new triangles created by adding the edges $\{v_1, v_3\}$ and $\{v_2, v_4\}$. For $\{v_1, v_3\}$, the number of triangles using this edge is equivalent to the number of subsets of size $k$ that are disjoint from both $v_1$ and $v_3$. The number of these subsets is ${n-2k+|V_1\cap V_3|\choose k}$. Since $V_2$ and $V_4$ might be included in these subsets, but the edges $\{v_1, v_2\}$ and $\{v_3, v_4\}$ were removed, the number of triangles using the edge $\{v_1, v_3\}$ is at least ${n-2k+|V_1\cap V_3|\choose k} - 2$. Furthermore, since we know that $|V_1\cap V_3| \geq 1$, the number of triangles using the edge $\{v_1, v_3\}$ is at least ${n-2k+1\choose k}-2$. The same holds for the number of triangles using the edge $\{v_2, v_4\}$.

Thus, the change in the number of triangles in the graph created by this swap is $2({n-2k+1\choose k}-2) - 2{n-2k\choose k}$. We claim that this is always greater than 0, thus showing that the number of triangles has changed in the graph. Showing this is equivalent to the following:  $${n-2k\choose k} < {n-2k+1\choose k}-2.$$

Using the relation ${n-2k+1\choose k} = {n-2k\choose k} + {n-2k\choose k-1}$, the above is equivalent to showing $$0 < {n-2k\choose k-1}- 2.$$ Note that since $k \geq 2$, we have that $k-1\geq 1$, and since $n\geq 3k$, $n-2k \geq k$. In particular, this means that ${n-2k\choose k-1} \geq {k\choose 1} = k$. This is greater than 2 for all choices except $Kn(6, 2)$ (i.e. $n=6$ and $k=2$). This particular choice has been confirmed to be unswappable via computation; thus, we have shown the desired result.
\end{proof}

\begin{proposition}
Let $Kn(n, k)$ denote the Kneser graph on $n, k$. If $k> 1$ and $n\geq 3k$, then $B_3(Kn(n, k)) \geq {n-1\choose k}$ and $T_3(Kn(n, k)) = {n\choose k}$.
\end{proposition}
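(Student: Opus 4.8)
The plan is to combine the unswappability of $Kn(n,k)$ (just established in the preceding lemma under exactly these hypotheses $k>1$, $n\ge 3k$) with the lower bounds of Theorem \ref{b3lowerbound} and Proposition \ref{t3lowerbound}, so that essentially all that remains is a vertex-cover computation and a neighbor-set distinctness check.

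\medskip

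\noindent\textbf{Lower bound for $B_3$.} I would first identify the size of a minimum vertex cover of $Kn(n,k)$. An independent set in $Kn(n,k)$ is precisely a family of $k$-subsets of $[n]$ that pairwise intersect, i.e.\ an intersecting family. Since $n \ge 3k \ge 2k$, the Erd\H{o}s--Ko--Rado theorem applies and gives that the maximum such family has size $\binom{n-1}{k-1}$ (for instance the ``star'' of all $k$-sets containing a fixed element), so the independence number of $Kn(n,k)$ is $\binom{n-1}{k-1}$. A set is a vertex cover if and only if its complement is independent, so the minimum vertex cover has size
$$\binom{n}{k} - \binom{n-1}{k-1} = \binom{n-1}{k},$$
by Pascal's identity. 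As $Kn(n,k)$ is unswappable, Theorem \ref{b3lowerbound} then yields $B_3(Kn(n,k)) \ge \binom{n-1}{k}$.

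\medskip

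\noindent\textbf{Value of $T_3$.} By Corollary \ref{tileUpperBoundCor}, since $Kn(n,k)$ is unswappable it suffices to show every two distinct vertices have distinct neighbor sets. Given distinct $k$-subsets $A \neq B$, choose $x \in A \setminus B$ (nonempty since $A$ and $B$ are distinct sets of the same size). The set $[n] \setminus (B \cup \{x\})$ has size $n - k - 1 \ge 2k - 1 \ge k-1$, so $\{x\}$ extends to a $k$-subset $C$ with $C \cap B = \emptyset$. Then $C$ is adjacent to $B$ but not to $A$ (as $x \in C \cap A$), so $N(A) \neq N(B)$. Corollary \ref{tileUpperBoundCor} then gives $T_3(Kn(n,k)) = \#V(Kn(n,k)) = \binom{n}{k}$.

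\medskip

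\noindent\textbf{Main obstacle.} There is no deep difficulty; the crux is correctly invoking Erd\H{o}s--Ko--Rado to pin down the independence number (hence the minimum vertex cover size), and checking that the hypothesis $n \ge 3k$ leaves enough room to build the distinguishing set $C$ in the neighbor-set argument. Everything else is a direct application of the machinery already developed in Sections \ref{sec: lower}.
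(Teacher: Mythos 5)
Your proposal is correct and follows essentially the same route as the paper: Erd\H{o}s--Ko--Rado gives the independence number $\binom{n-1}{k-1}$, hence a minimum vertex cover of size $\binom{n-1}{k}$, and Theorem \ref{b3lowerbound} plus the distinct-neighbor-set criterion finish the job. The only difference is that you explicitly construct the distinguishing $k$-set $C$ to verify that distinct vertices have distinct neighbor sets (and correctly cite Corollary \ref{tileUpperBoundCor} for the equality), a detail the paper merely asserts.
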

\begin{proof}
By the Erdos-Ko-Rado Theorem \cite{erdoskorado1}, the maximal independent set of the Kneser graph has size ${n-1\choose k-1}$. Thus, the minimum vertex cover has size ${n\choose k} - {n-1\choose k-1} = {n-1\choose k}$, and $B_3(Kn(n, k)) \geq {n-1 \choose k}$ by Theorem \ref{b3lowerbound}. Since every vertex has a unique neighbor set (no two subsets are disjoint with exactly the same set of other subsets), we have $T_3 = \#V(Kn(n,k)) = {n \choose k}$ by Theorem \ref{t3lowerbound}.
\end{proof}

Thus, we have exactly determined the value of $T_3$ for certain Kneser graphs. In particular, we have found that every single vertex requires a unique tile type---a very strong result. Since the result for $B_3(Kn(n ,k))$ is only a lower bound, we may attempt to use Theorem \ref{upper bound theorem} to obtain a corresponding upper bound. In order to use this theorem, we need 2-edge connectivity. Kneser graphs were shown to be Hamiltonian in \cite{merino2023kneser}, which implies the 2-edge connectivity of the Kneser graphs. However, a simple way to apply the upper bound is only shown for a small subset of Kneser graphs in the following proposition.

\begin{proposition}
\label{oops}
For the Kneser graph $Kn(n, k)$ such that $n\geq 6$ and $k=2$, we have that $B_3(Kn(n, k)) = {n-1\choose k} = {n-1\choose 2}$.
\end{proposition}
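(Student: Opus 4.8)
The plan is to combine the lower bound $B_3(Kn(n,2)) \ge \binom{n-1}{2}$ already established in the previous proposition (specializing $k=2$, so $\binom{n-1}{k} = \binom{n-1}{2}$) with a matching upper bound obtained from Theorem \ref{upper bound theorem}. Since $Kn(n,2)$ is $\binom{n-2}{2}$-regular, it suffices to exhibit a vertex cover $K$ of $Kn(n,2)$ with $|K| = \binom{n-1}{2}$ that is neighborhood independent and whose induced subgraph is $2$-edge-connected; Theorem \ref{upper bound theorem} then gives $B_3(Kn(n,2)) \le |K| = \binom{n-1}{2}$, forcing equality.

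For the vertex cover, I would fix the element $n$ and take $K = \binom{[n-1]}{2}$, the set of all $2$-subsets of $[n]$ avoiding $n$, so that $|K| = \binom{n-1}{2}$ (this matches the minimum vertex cover size coming from the Erd\H{o}s--Ko--Rado count, which is why the two bounds coincide). That $K$ is a vertex cover is immediate: any edge of $Kn(n,2)$ joins two disjoint $2$-subsets, and at most one of them contains $n$, so at least one lies in $K$. The subgraph induced on $K$ is then exactly $Kn(n-1,2)$, since $K = \binom{[n-1]}{2}$ and adjacency is still disjointness.

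To see that $Kn(n-1,2)$ is $2$-edge-connected for $n \ge 6$ (so $n-1 \ge 5$), I would first note it is connected: two $2$-subsets of $[n-1]$ are either disjoint, hence adjacent, or share one element and then have a common neighbor among the $\ge 3$ remaining elements. Then I would show every edge lies on a cycle: given an edge $\{A,B\}$ with $A \cap B = \emptyset$, apply an automorphism in $S_{n-1}$ so that $A = \{1,2\}$ and $B = \{3,4\}$, pick a fifth element $5 \in [n-1]$ (available exactly because $n \ge 6$), and observe that $\{1,2\}, \{3,4\}, \{1,5\}, \{2,3\}, \{4,5\}$ is a $5$-cycle through $\{A,B\}$. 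A connected graph in which every edge lies on a cycle is $2$-edge-connected. I would flag that the Hamiltonicity result cited above is of no use here, since at $n = 6$ the induced subgraph is the non-Hamiltonian Petersen graph $Kn(5,2)$, so $2$-edge-connectivity must be argued directly.

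The substantive step is verifying neighborhood independence. The vertices outside $K$ are exactly the $n-1$ pairs $\{i,n\}$ with $i \in [n-1]$, and for $\{a,b\} \in K$ one has $\rho_{\{i,n\}}(\{a,b\}) = 1$ iff $i \notin \{a,b\}$. Suppose $\sum_{i \in [n-1]} c_i\,\rho_{\{i,n\}} = 0$; evaluating at a pair $\{a,b\}$ gives $\sum_{i \ne a,b} c_i = 0$, i.e. $c_a + c_b = T$ for every $a \ne b$ in $[n-1]$, where $T := \sum_{i \in [n-1]} c_i$. Since $n-1 \ge 3$, comparing these equations across triples of indices forces all $c_i$ to equal a common value $d$; then $2d = T = (n-1)d$ gives $(n-3)d = 0$, so $d = 0$ because $n \ge 6$. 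Hence the vectors $\rho_{\{i,n\}}$ are linearly independent, $K$ is neighborhood independent, and Theorem \ref{upper bound theorem} completes the proof. I expect this linear-algebra reduction — turning the combinatorial neighbor condition into the small system $c_a + c_b = T$ and solving it — to be the crux, with the $2$-edge-connectivity of the induced Petersen-type subgraph at $n = 6$ being the only other point that requires care.
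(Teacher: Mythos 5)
Your proposal is correct and follows the same overall strategy as the paper: the same vertex cover $K=\binom{[n-1]}{2}$ of size $\binom{n-1}{2}$, the same observation that the induced subgraph is $Kn(n-1,2)$, and the same appeal to Theorem \ref{upper bound theorem} to match the Erd\H{o}s--Ko--Rado lower bound. Two details differ, both in your favor. First, the paper justifies $2$-edge-connectivity of the induced subgraph by citing the Hamiltonicity of Kneser graphs; as you correctly flag, this does not literally cover $n=6$, where the induced subgraph is the Petersen graph $Kn(5,2)$ --- the unique non-Hamiltonian connected Kneser graph --- so your direct argument (connectivity plus an explicit $5$-cycle through every edge, whence no bridges) genuinely patches a small gap in the paper's justification; the conclusion is of course still true since the Petersen graph is $3$-edge-connected. (One trivial slip: after removing the three elements used by two overlapping pairs, $[n-1]$ has $n-4\ge 2$ elements left, not ``$\ge 3$''; two is still enough to produce a common neighbor.) Second, for neighborhood independence the paper restricts the vectors to the coordinates $\{i,n\}$, observes the submatrix $J_{n-1}-I_{n-1}$, and then handles $\rho_{\{1,n\}}$ separately, whereas you evaluate the dependence relation at every coordinate $\{a,b\}$ to get the symmetric system $c_a+c_b=T$ and solve it directly; both computations are valid, and yours avoids the special-casing of one vector. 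Either way the hypotheses of Theorem \ref{upper bound theorem} are verified and the equality $B_3(Kn(n,2))=\binom{n-1}{2}$ follows.
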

\begin{proof}
Using the Erdos-Ko-Rado theorem, we have that a minimum vertex cover of $Kn(n, k)$ is all the vertices that are subsets of $\{1, 2, ..., n-1\}$ (so that the complement of the vertex cover is all sets containing $n$). Note that the induced subgraph of this vertex cover is $Kn(n-1, k)$ since there is still an edge between vertices in the cover if and only if they are disjoint; by \cite{merino2023kneser}, we have that this induced subgraph is 2-edge-connected. It only remains to be shown that $K$ is neighborhood independent. For this we wish to show that the set $\{\rho_{S} : S\subseteq[n]\cs |S| = k\cs n\in S\}$ is linearly independent, using $\rho(S)$ as defined in Definition \ref{neighborhood indepedence}.

To show this, consider the minimum vertex cover to be all subsets of $[n]$ of size 2 that do not include the element 1. Then, the complement of the vertex cover is all subsets that do include the element 1, and let this set be $S$. Consider $\{\rho_S\}$ as defined above, and suppose $\displaystyle\sum_{v\in S} h_v\rho_v = 0$.

Since $\displaystyle\sum_{v\in S} h_v\rho_v$ equals the zero vector, this sum will also equal the zero vector if we take a subset of the indices (corresponding to a subset of the vertex cover), then restrict each vector to these indices. In particular, let us restrict the indices to only the sets in the vertex cover that contain the element $n$. Let $\rho_v'$ denote the vector $\rho_v$, but restricted only to the indices that contain $n$. Then, the element $\{1, n\}\in S$ will give the zero vector $\rho_{\{1, n\}}' = 0$. Furthermore, all elements $\{1, i\}\in S$ for $i\neq n$ will be the all-ones vector, except for the index corresponding to $\{i, n\}$ in the vertex cover, which will contain 0. In particular, this means that if we order the restricted indices by $\{2, n\}\cs \{3, n\}\cs \{4, n\}\cs ...\cs \{n-1, n\}$, we have that $\rho_{\{1, 2\}}' = (0, 1, 1, ..., 1)$, $\rho_{\{1, 3\}}' = (1, 0, 1, 1, ..., 1)$, etc. until $\rho_{\{1, n-1\}}' = (1, 1, ..., 1, 0)$, since all indices are 1 for $\rho_{\{1, i\}}$ except the $i$-th index. As previously mentioned, we also have $\rho_{\{1, n\}}' = (0, 0, ..., 0)$. 

It can be easily verified that the set of vectors corresponding to $(0, 1, 1, ..., 1)$, $(1, 0, 1, ..., 1)$, which form the matrix $J_{n-1, n-1} - I_{n-1}$ (where $J_{n-1, n-1}$ is the all-ones matrix of dimension $n-1\times n-1$ and $I_{n-1}$ is the identity matrix of dimension $n-1\times n-1$), is linearly independent. In particular, this means that $h_{\{1, i\}} = 0$ for all $i\neq n$. Since $\rho_{\{1, n\}}'$ is the zero vector, $h_{\{1, n\}}$ is the only coefficient that can be nonzero. However, $\rho_{\{1, n\}}$ is not the zero vector when not restricted to our set of indices; thus, the linear combination $\displaystyle\sum_{v\in S} h_v\rho_v$ can only be equal to zero if $h_{\{1, n\}} = 0$ as well. Thus, we have shown $h_v = 0$ for all $v$, giving us the desired result of neighborhood independence. This shows that there exists a pot to match our lower bound for Kneser graphs on $k=2$.
\end{proof}

Thus, we have exactly determined the value $T_3(Kn(n, k))$ for large enough $n, k$, as well as determined $B_3(Kn(n, k))$ for large enough $n$ and $k=2$. The authors note that showing neighborhood independence for vertex covers becomes difficult for larger values of $k$, hence the limited subset of Kneser graphs in Proposition \ref{oops}. For an example of a pot for Kneser graphs, refer back to Figure \ref{petersen_pot}.

\section{Conclusion}
This paper focuses on providing general-use theorems for graphs with certain properties in Scenario 3, then applying these specifically to select $k$-regular graph families. In particular, we established a lower bound for unswappable graphs  (not necessarily regular) using a vertex cover model. We also provided a method for constructing pots for $k$-regular graph with certain constraints, which established an upper bound. Furthermore, by finding lower bounds and exact values for select graphs in Scenario 3, we have also implicitly provided upper bounds in Scenario 2 for graphs which values are not yet known.

Lastly, we pose a few directions for future research. The first is to try and develop a stronger understanding of what makes a graph ``swappable" or ``unswappable." In particular, is there a more efficient manner to check for unswappability? If so, can we formulate a hardness result? A second question is if we can generalize Theorem \ref{b3lowerbound} to non $k$-regular graphs. The regularity hypothesis is only used briefly, and thus it is natural to ask if it can be removed or weakened in some way. Even if a broad generalization cannot be found, there may be ways to adapt the proof to specific non-regular graphs or graph families.

\subsection{Acknowledgements}
We would like to thank the organizers, teaching assistants, ICERM, the NSF, and the undergraduates of the Summer@ICERM program for making this project possible.
\\\\
    This material is based upon work supported by the National Science Foundation under Grant No. DMS-1929284 while the authors were in residence at the Institute for Computational and Experimental Research in Mathematics in Providence, RI, during the Summer@ICERM program.

\bibliographystyle{ieeetr}
\bibliography{mybib}

\begin{thebibliography}{10}

\bibitem{Seeman_2016}
N.~C. Seeman, {\em Structural DNA Nanotechnology}.
\newblock Cambridge University Press, 2016.

\bibitem{labean2007constructing}
T.~H. LaBean and H.~Li, ``Constructing novel materials with dna,'' {\em Nano today}, vol.~2, no.~2, pp.~26--35, 2007.

\bibitem{adleman}
L.~M. Adleman, ``Molecular computation of solutions to combinatorial problems,'' {\em Science}, vol.~266, no.~5187, pp.~1021--1024, 1994.

\bibitem{LABEAN200726}
T.~H. LaBean and H.~Li, ``Constructing novel materials with dna,'' {\em Nano Today}, vol.~2, no.~2, pp.~26--35, 2007.

\bibitem{legs}
T.~Omabegho, R.~Sha, and N.~C. Seeman, ``A bipedal dna brownian motor with coordinated legs,'' {\em Science}, vol.~324, no.~5923, pp.~67--71, 2009.

\bibitem{kallenbach1983immobile}
N.~R. Kallenbach, R.-I. Ma, and N.~C. Seeman, ``An immobile nucleic acid junction constructed from oligonucleotides,'' {\em Nature}, vol.~305, pp.~829--831, 1983.

\bibitem{liu2016creating}
D.~Liu, G.~Chen, U.~Akhter, T.~M. Cronin, and Y.~Weizmann, ``Creating complex molecular topologies by configuring dna four-way junctions,'' {\em Nature Chemistry}, vol.~8, pp.~907--914, 2016.

\bibitem{nyu}
P.~Sa-Ardyen, N.~Jonoska, and N.~Seeman, ``Self-assembling dna graphs,'' {\em Natural Computing}, vol.~2, no.~4, pp.~427--438, 2003.
\newblock Funding Information: This work has been supported by grants GM-29554 from NIGMS, N00014-98-1-0093 from ONR, grants DMI-0210844, EIA-0086015, EIA-0074808, DMR-01138790, and CTS-0103002 from NSF, and F30602-01-2-0561 from DARPA/AFOSR.

\bibitem{ellis2014minimal}
J.~Ellis-Monaghan, G.~Pangborn, L.~Beaudin, D.~Miller, N.~Bruno, and A.~Hashimoto, ``Minimal tile and bond-edge types for self-assembling dna graphs,'' {\em Discrete and Topological Models in Molecular Biology}, pp.~241--270, 2014.

\bibitem{ellis2019tile}
J.~Ellis-Monaghan, N.~Jonoska, and G.~Pangborn, ``Tile-based dna nanostructures: mathematical design and problem encoding,'' {\em Algebraic and Combinatorial Computational Biology}, pp.~35--60, 2019.

\bibitem{westGT}
D.~B. West, {\em Introduction to Graph Theory}.
\newblock Prentice Hall, 2~ed., September 2000.

\bibitem{almodovar2021complexity}
L.~Almod{\'o}var, J.~Ellis-Monaghan, A.~Harsy, C.~Johnson, and J.~Sorrells, ``Computational complexity and pragmatic solutions for flexible tile based {{DNA}} self-assembly,'' {\em Natural Computing}, pp.~1--22, 2024.

\bibitem{BF2020}
S.~Bonvicini and M.~M. Ferrari, ``On the minimum number of bond-edge types and tile types: an approach by edge-colorings of graphs,'' {\em Discrete Appl. Math.}, vol.~277, pp.~1--13, 2020.

\bibitem{almodovar2019triangular}
L.~Almodovar, S.~Martin, S.~Mauro, and H.~Todt, ``Minimal tile and bond-edge types for self-assembling dna graphs of triangular lattice graphs,'' 2019.

\bibitem{redmon2023optimal}
E.~Redmon, M.~Mena, M.~Vesta, A.~R. Cortes, L.~Gernes, S.~Merheb, N.~Soto, C.~Stimpert, and A.~Harsy, ``Optimal tilings of bipartite graphs using self-assembling {DNA},'' {\em The PUMP Journal of Undergraduate Research}, vol.~6, pp.~124--150, 2023.

\bibitem{griffin2023tile}
C.~Griffin and J.~Sorrells, ``Tile-based modeling of {DNA} self-assembly for two graph families with appended paths,'' {\em Involve, a Journal of Mathematics}, vol.~16, no.~1, pp.~69--106, 2023.

\bibitem{almodovar2021optimal}
L.~Almodovar, J.~Ellis-Monaghan, A.~Harsy, C.~Johnson, and J.~Sorrells, ``Optimal tile-based dna self-assembly designs for lattice graphs and platonic solids,'' 2021.

\bibitem{https://doi.org/10.1112/blms.12154}
{Schmidt} and {Simon}, ``The petersen graph has no quantum symmetry,'' {\em Bulletin of the London Mathematical Society}, vol.~50, no.~3, pp.~395--400, 2018.

\bibitem{regularvc}
G.~Fricke, S.~Hedetniemi, and D.~Jacobs, ``Independence and irredundance in k-regular graphs.,'' {\em Ars Comb.}, vol.~49, pp.~271--279, 08 1998.

\bibitem{Babai2019GROUPGA}
L.~Babai, ``Group, graphs, algorithms: The graph isomorphism problem,'' {\em Proceedings of the International Congress of Mathematicians (ICM 2018)}, 2019.

\bibitem{rooksgraphassembly}
E.~Gonzales, ``Tile based self-assembly of the rook's graph,'' {\em Electronic Theses, Projects, and Dissertations}, 2020.

\bibitem{lovasz1978kneser}
L.~Lov{\'a}sz, ``Kneser's conjecture, chromatic number, and homotopy,'' {\em Journal of Combinatorial Theory, Series A}, vol.~25, no.~3, pp.~319--324, 1978.

\bibitem{erdoskorado1}
P.~Erdós, C.~h. Ko, and R.~Rado, ``{Intersection Theorems for Systems of Finite Sets},'' {\em The Quarterly Journal of Mathematics}, vol.~12, pp.~313--320, 01 1961.

\bibitem{merino2023kneser}
A.~Merino, T.~M{\"u}tze, and Namrata, ``Kneser graphs are hamiltonian,'' in {\em Proceedings of the 55th Annual ACM Symposium on Theory of Computing}, pp.~963--970, 2023.

\end{thebibliography}

\pagebreak

\appendix

\section{Implementation of Unswappability Checker}
\label{appendixA}
\begin{minted}[fontsize=\footnotesize]{python}
import numpy as np

def graphFormat(g):
    return [(int(pair.split('<->')[0]) - 1, int(pair.split('<->')[1]) - 1) for pair in g[1:-1].split(',')]

def numSpanningTrees(g):
    size = max(max(a) for a in g) + 1
    laplacian = [[0 for i in range(size)] for i in range(size)]
    for i, j in g:
        laplacian[i][j] -= 1
        laplacian[j][i] -= 1
        laplacian[i][i] += 1
        laplacian[j][j] += 1
    # print(np.array(laplacian, dtype=int))
    redLaplace = np.array(laplacian, dtype=int)[:size - 1, :size - 1]
    return int(np.rint(np.linalg.det(redLaplace)))

def nonMatch(g):
    matches = []
    for i in g:
        for j in g:
            if i[0] != j[0] and i[0] != j[1] and i[1] != j[0] and i[1] != j[1]:
                g_copy = g.copy()
                g_copy.remove(i)
                g_copy.remove(j)
                g_copy.append((i[0], j[1]))
                g_copy.append((i[1], j[0]))
                if numSpanningTrees(g) == numSpanningTrees(g_copy):
                    matches.append((i, j))
                g_copy = g.copy()
                g_copy.remove(i)
                g_copy.remove(j)
                g_copy.append((i[0], j[0]))
                g_copy.append((i[1], j[1]))
                if numSpanningTrees(g) == numSpanningTrees(g_copy):
                    matches.append((i, (j[1], j[0])))  
    return matches

g = graphFormat(graphOfInterest)
print(g)
print(numSpanningTrees(g))
print(nonMatch(g))
\end{minted}

\end{document}